\newtheorem{proposition}{Proposition}[section]
\newtheorem{theorem}[proposition]{Theorem}
\newtheorem{conjecture}[proposition]{Conjecture}
\newtheorem*{theorem*}{Theorem}
\newtheorem*{thm_2.1}{Theorem 2.1}
\newtheorem*{thm_3.1}{Theorem 3.1}
\newtheorem*{thm_4.1}{Theorem 4.1}
\newtheorem{corollary}[proposition]{Corollary}
\theoremstyle{definition}
\newtheorem{definition}[proposition]{Definition}
\newtheorem{remark}[proposition]{Remark}
\newcommand{\K}{\text{K}}
\newcommand{\M}{\text{M}}
\newcommand{\A}{\text{A}}
\newcommand{\N}{\text{N(A)}}
\newcommand{\ND}{\text{N}(\text{D}^2)}
\newcommand{\D}{{\text{D}}^2}
\newcommand{\B}{{\text{B}}^4}
\newcommand{\C}{{\text{S}}^1}
\newcommand{\CCC}{{\text{S}}^3}
\newcommand{\F}{\text{F}}
\newcommand{\R}{\text{R}}
\begin{document}
\title[A Construction of Slice Knots via Annulus Modifications]{A Construction of Slice Knots via Annulus Modifications}

\author{JungHwan Park}
\address{Department of Mathematics, Rice University MS-136\\
6100 Main St. P.O. Box 1892\\
Houston, TX 77251-1892}
\email{jp35@rice.edu}

\date{\today}

\subjclass[2000]{57M25}

\begin{abstract}
We define an operation on homology $\B$ which we call an $n$-twist annulus modification. We give a new construction of smoothly slice knots and exotically slice knots via $n$-twist annulus modifications. As an application, we present a new example of a smoothly slice knot with non-slice derivatives. Such examples were first discovered by Cochran and Davis. Also, we relate $n$-twist annulus modifications to $n$-fold annulus twists which was first introduced by Osoinach, then has been used by Abe and Tange to construct smoothly slice knots. Furthermore we consider $n$-twist annulus modifications in more general setting to show that any exotically slice knot can be obtained by the image of the unknot in the boundary of a smooth $4$-manifold homeomorphic to $\B$ after an annulus modification.
\end{abstract}

\maketitle
\section{Introduction}

We begin with some definitions. A knot $\K$ is an embedding of an oriented $\C$ into $\CCC$ and a link $\text{L}$ is an embedding of a disjoint collection of oriented $\C$ into $\CCC$. If $\K$ bounds a smoothly embedded $2$-disk $\D$ in the standard $\B$, we call $\K$ a smoothly slice knot and $\D$ a smooth slice disk. Moreover, if $\K$ bounds a smoothly embedded $2$-disk $\D$ in a smooth oriented  $4$-manifold $\M$ that is homeomorphic to the standard $\B$ but not necessarily diffeomorphic to the standard $\B$, we call $\K$ exotically slice in $\M$. Also, if a smoothly slice knot $\K$ bounds a smoothly embedded $2$-disk $\D$ in the standard $\B$ where there are no local maximum of the height function restricted to $\D$, we call $\K$ a ribbon knot. A link $\text{L}$ is a smoothly slice link if each component of $\text{L}$ bounds a smoothly embedded disjoint $2$-disk $\D$ in the standard $\B$.

In this paper we will define an operation on homology $\B$ which we call an $n$-twist annulus modification in Section $2$. Then we will fix a slice knot and use this modification under a certain condition (see Section $2$), to obtain an infinite family of exotically slice knots. The basic idea for this condition is that there exist a smooth proper embedding of an annulus disjoint from a smooth slice disk (see Figure $1$), which is called $l$-nice. The technique we use here is similar to the technique which was used in \cite{CD14} to construct a smoothly slice knot with non-slice derivatives (see Section $1.1$).
\begin{figure}[h]
\centering
\includegraphics[width=4.5in]{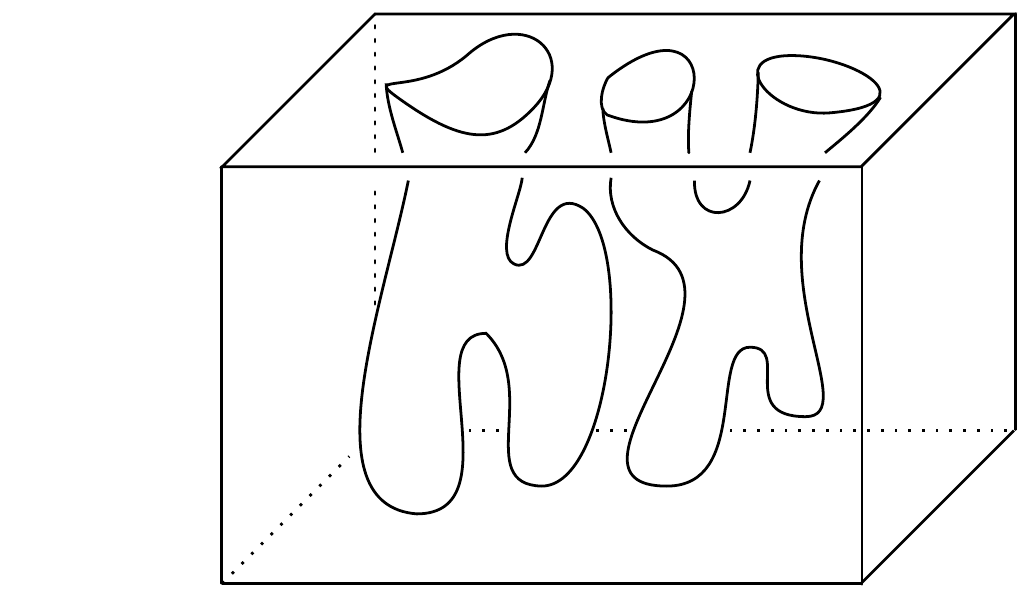}
  \put(-5,1){ $\B = \text{B}^3 \times [0 , 1] =$}
    \put(-2.8,2.4){ $\K$}
        \put(-1.9,2.4){ $\eta_1$}
            \put(-0.9,2.4){ $\eta_2$}
 \put(-2.8,1){ $\D$}
 \put(-1.4,1.2){ $\A$}
  
\caption{Schematic picture of the condition $l$-nice.}
\end{figure}

\begin{thm_2.1}\label{Main}Let $K$ be a smoothly slice knot bounding a smooth disk $\D$ in the standard $\B$, and let $(\{ \eta_1, \eta_2 \}, \phi_A)$ be $l$-nice. Then $\frac{nl+1}{n}$ Dehn surgery on $\eta_1$ followed by $\frac{nl-1}{n}$ Dehn surgery on $\eta_2$ will produce an exotically slice knot $\K_{(\phi_\A,n)} \subseteq \CCC$ for any integer $n$, where $\K_{(\phi_\A,n)}$ is the image of $\K$ in the new $3$-manifold $\CCC$.\end{thm_2.1}

If we restrict our condition further (see Section $3$), which is called $l$-standard, we can use annulus modifications to obtain a infinite family of smoothly slice knots. The basic idea for this condition is that the smooth proper embedding of an annulus is isotopic to the standard annulus which is $\A_l$ in Figure $7$.

\begin{thm_3.1}\label{Diffeo}Let $\K$ be a smoothly slice knot bounding a smooth disk $\D$ in the standard $\B$, and let $(\{ \eta_1, \eta_2 \}, \phi_A)$ be $l$-standard. Then $\frac{nl+1}{n}$ Dehn surgery on $\eta_1$ followed by $\frac{nl-1}{n}$ Dehn surgery on $\eta_2$ will produce a smoothly slice knot $\K_{(\phi_\A,n)}\subseteq \CCC$ for any integer $n$, where $\K_{(\phi_\A,n)}$ is the image of $\K$ in the new $3$-manifold $\CCC$.\end{thm_3.1}

We have two applications of these theorems.

\subsection{Application 1 : An example of a slice knot with non-slice derivatives}
Recall that any knot in $\CCC$ bounds a Seifert Surface $\F$. From $\F$, we can define a Seifert form $\beta_\F : H_1(F) \times H_1(F) \rightarrow \mathbb{Z}$, which is defined by $\beta_\F( [x], [y]) = lk(x,y^+)$, where $x$ is union of simple closed curves on F which represents $[x]$, $y^+$ is positive push off of union of simple closed curves on F which represents $[y]$, and $lk$ denotes linking number. It was proven by Levine \cite[Lemma2]{Le69} that if $\K$ is a smoothly slice knot then $\beta_\F$ is metabolic for any Seifert surface $\F$ for $\K$, i.e.\ there exists $H=\mathbb{Z}^{\frac{1}{2} rank H_1(F)}$ a direct summand of $H_1(F)$, such that $\beta_F$ vanishes on $H$. We call a knot algebraically slice knot if it has metabolic Seifert form. Then a link $\{\gamma_1, \gamma_2, \cdots, \gamma_{\frac{1}{2} rank H_1(F)}\}$ disjointly embedded in a surface $\F$ where homology class forms a basis for $H$ is called a derivative of $\K$ (see Figure $2$). Notice that we can define a derivative of a knot for any algebraically slice knots. 

\begin{figure}[h]
\centering
\includegraphics[width=2.5in]{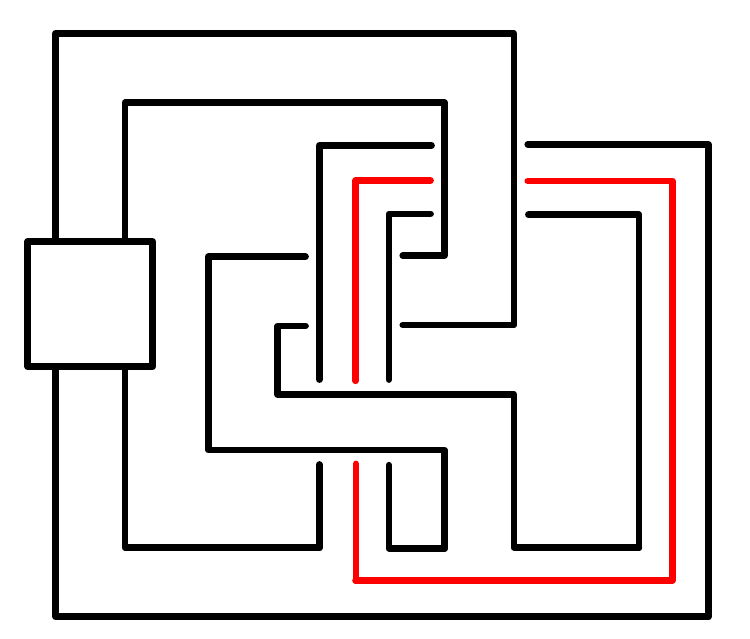}
  \put(-2.3,1.07){$-1$}
\caption{A derivative of $6_{1}$ knot}
\end{figure}

If a knot $\K$ has a derivative which is a smoothly slice link, then $\K$ is smoothly slice (see Figure $3$). A natural question is whether the converse holds. This was asked by Kauffman in 1982, for genus $1$ knots.

\begin{figure}[h]
\centering
\includegraphics[width=2.5in]{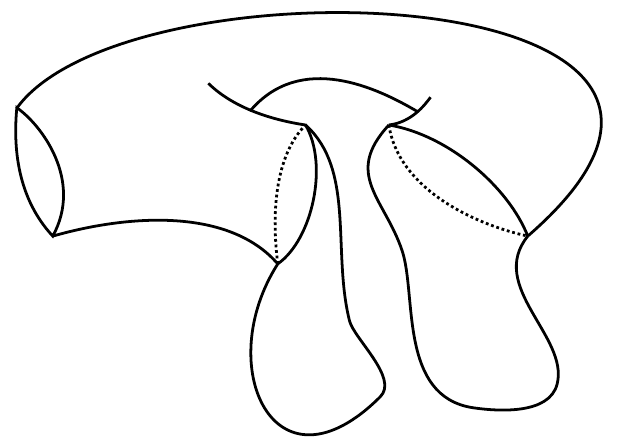}
\caption{Surgery on derivative gives us smoothly slice disk}
\end{figure}

\begin{conjecture} \cite[$\text{N}1.52$]{Ka87,kirbylist} If $\K$ is a smoothly slice knot and $\F$ is a genus $1$ Seifert surface for $\K$ then there exists an essential simple closed curve $d$ on $\F$ such that $lk(d,d^+)=0$ and $d$ is a slice knot. \end{conjecture}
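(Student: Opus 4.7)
Given that the paragraph preceding this conjecture announces that the paper will ``present a new example of a smoothly slice knot with non-slice derivatives,'' I interpret the statement as one to be refuted rather than proved, following the precedent of Cochran--Davis. My plan is therefore to construct, using the $l$-standard annulus modification framework of Theorem $3.1$, a smoothly slice knot $\K$ with a genus $1$ Seifert surface $\F$ on which no essential metabolizing curve is slice.

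The construction would unfold as follows. First, I would select a seed slice knot $\K_0$ admitting a genus $1$ Seifert surface $\F_0$ and a known slice derivative $d_0 \subseteq \F_0$ (for example $6_1$ with the derivative drawn in Figure $2$), and push the curves $\eta_1, \eta_2$ bounding the annulus $\A$ into the complement of $\F_0$ while arranging them to link $d_0$ in a controllable way. Second, I would verify the $l$-standard condition for $(\{\eta_1,\eta_2\},\phi_\A)$ so that Theorem $3.1$ furnishes an infinite family of smoothly slice knots $\K_{(\phi_\A,n)}$. Third, I would track how $\F_0$ and $d_0$ transform under the pair of Dehn surgeries: since $\F_0$ is disjoint from $\A$, it survives in the new $3$-manifold as a genus $1$ Seifert surface $\F$ for $\K_{(\phi_\A,n)}$, while $d_0$ transforms into a new curve $d_n \subseteq \F$ whose knot type in $\CCC$ is determined by the prescribed $\tfrac{nl+1}{n}$ and $\tfrac{nl-1}{n}$ surgeries on $\eta_1,\eta_2$. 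Fourth, I would deploy a slice obstruction robust enough to detect that $d_n$ is not slice for at least one (and ideally infinitely many) values of $n$; natural candidates are Casson--Gordon invariants, $d$-invariants of the double branched cover, or twisted Alexander polynomials evaluated along the branched cover's characters.

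The hard part will be the final obstruction, compounded by a uniqueness issue: Kauffman's conjecture demands a slice derivative on \emph{every} genus $1$ Seifert surface of $\K_{(\phi_\A,n)}$, not merely on the particular $\F$ produced by our construction. I would address this by arranging $\K_{(\phi_\A,n)}$ to have an Alexander polynomial that forces the genus $1$ Seifert surface to be unique up to isotopy (and pins down the Seifert genus from below), thereby reducing the problem to a finite check on $\F$: the metabolizing condition $lk(d,d^+)=0$ cuts out at most two primitive isotopy classes of candidate derivatives on a fixed genus $1$ surface, and both can then be obstructed simultaneously by a single sufficiently strong invariant. The role of the annulus modification is to vary this setup over $n$, producing infinitely many distinct counterexamples while preserving the controlled algebraic structure needed to keep the obstruction computation tractable.
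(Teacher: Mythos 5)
You read the situation correctly: the statement is Kauffman's conjecture, which the paper (following Cochran--Davis) refutes rather than proves, and the shape of your plan --- start from a slice knot with a slice derivative, apply an $l$-standard annulus modification to get a new slice knot, carry the genus $1$ surface through the two surgeries, and obstruct the resulting derivatives --- is exactly the shape of the paper's Section $4$. But there are two genuine gaps. First, your proposal stops precisely where the content begins: no seed is fixed, the $l$-standard condition is not verified, and the sliceness obstruction is explicitly deferred as ``the hard part.'' In the paper this is the entire content of Theorem $4.1$: for the explicit knot $\R$ with Seifert matrix $\begin{pmatrix} 2 & 1 \\ 0 & 0 \end{pmatrix}$ and the explicit pair $(\{\eta_1,\eta_2\},\phi_\A)$, the $0$-standard condition is checked via Corollary $3.2$ (Scharlemann plus Livingston, using that $\phi_\A$ has one index one and one index two critical point), the $1$-twist modification yields $\R_1$, the modified surface $\widetilde{\F}$ has Seifert matrix $\begin{pmatrix} 2 & 0 \\ -1 & -1 \end{pmatrix}$, the only metabolizing primitive classes are $[\widetilde{x_1}]+[\widetilde{x_2}]$ and $[\widetilde{x_1}]-2[\widetilde{x_2}]$, and both curves are obstructed by nothing fancier than Fox--Milnor: $\Delta_{\gamma_1}(-1)=\Delta_{\gamma_2}(-1)=17$ is not a square, so $\gamma_1,\gamma_2$ are not even algebraically slice. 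No Casson--Gordon invariants, $d$-invariants, or twisted Alexander polynomials are needed; without some such completed computation your text is a program, not a refutation.

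Second, your ``uniqueness issue'' is a misreading of the quantifier, and the remedy you propose for it is not available. The conjecture asserts that for every slice $\K$ and \emph{every} genus $1$ Seifert surface $\F$ there exists a slice derivative; its negation therefore requires only a single pair $(\K,\F)$ admitting no slice derivative, which is exactly what the paper exhibits. You do not need to control all genus $1$ Seifert surfaces of $\K_{(\phi_\A,n)}$, and the extra step you add --- choosing the Alexander polynomial so as to force the genus $1$ Seifert surface to be unique up to isotopy --- is not something the Alexander polynomial can deliver (it bounds the genus from below but says nothing about uniqueness of minimal genus Seifert surfaces). The correct finite check is the one you partially identified: on the one chosen surface, the condition $lk(d,d^+)=0$ singles out at most two primitive classes, and one obstructs both corresponding curves. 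Drop the uniqueness step and replace the deferred obstruction with an explicit computation on an explicit example, and you recover the paper's argument.
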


A lot of evidence which supported this conjecture were found by Casson, Cooper, Gilmer, Gordon, Livingsont, Litherland, Cochran, Orr, Teichner, Harvey, Leidy and others \cite{Gi83,Li84,Gi93,GL92,GL92',GL13,CHL10,COT04}. However, the conjecture was false: Cochran and Davis recently constructed a smoothly slice knot $\K$ where neither of its derivatives is smoothly slice in \cite{CD14}. Surprisingly both of the derivatives have non-zero Arf invariant. In particular, they are not algebraically slice. In this paper we present a different example of a smoothly slice knot with non-slice derivatives. We obtain this example by using an $n$-twist annulus modification.

\begin{thm_4.1}\label{Main}Let $\R_1$ be a knot described in Section $4$. Then $\R_1$ is a smoothly slice knot with non-slice derivatives.\end{thm_4.1}

The Kauffman's conjecture can be easily generalized to higher genus as follows.

\begin{conjecture} If $\K$ is a smoothly slice knot and $\F$ is a genus $g$ Seifert surface for $\K$ then there exists a link $\text{L} = \{\gamma_1, \gamma_2, \cdots, \gamma_g \}$ on $\F$ such that $\text{L}$ is a derivative $\K$ and $\text{L}$ is a smoothly slice link.\end{conjecture}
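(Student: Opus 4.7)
The natural plan is to try to promote the algebraic metabolizer provided by Levine's theorem to a genuine geometric one. Given a smoothly slice knot $\K$ with smooth slice disk $\D \subseteq \B$ and a genus $g$ Seifert surface $\F$ for $\K$, first push $\F$ slightly into $\B$ rel.\ boundary so that $\F$ and $\D$ meet only along $\K$, and let $\Sigma := \F \cup \D$ be the resulting closed genus $g$ surface smoothly embedded in $\B$. The hope would be to find a smoothly embedded $3$-dimensional handlebody $V \subseteq \B$ with $\partial V = \Sigma$: any complete system of $g$ meridian disks for $V$ would meet $\F$ in compressing curves forming the desired derivative $\text{L}$, with those meridian disks restricted to the $\D$-side serving as a disjoint collection of smooth slice disks for $\text{L}$.

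Concretely I would proceed in three steps. First, pick a metabolizer $H \subseteq H_1(\F)$ for $\beta_\F$, which exists by Levine's theorem, and represent a basis of $H$ by a pairwise disjoint system of simple closed curves $\{\gamma_1, \ldots, \gamma_g\}$ on $\F$; this already yields a candidate derivative $\text{L}$ that is at least algebraically slice. Second, try to bound each $\gamma_i$ by a smoothly embedded disk in $\B$ disjoint from $\F$ and from the other $\gamma_j$: use that $[\gamma_i]$ vanishes in $H_1(\B \setminus \F)$ for a carefully chosen push-off, produce immersed disks, and then attempt to eliminate double points and mutual intersections by finger- and Whitney-move manipulations in the $4$-ball. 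Third, if the raw $\gamma_i$ are not slice, try to surger them by an $n$-twist annulus modification in the spirit of Theorem $3.1$, chosen so that the modification preserves sliceness of $\K$ while rendering each component of $\text{L}$ slice.

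The main obstacle is the gap between ``$\gamma_i$ bounds in $H_1$'' and ``$\gamma_i$ bounds a smoothly embedded disk in $\B$'', which is precisely what higher-order obstructions such as Casson-Gordon signatures, von Neumann $\rho$-invariants, and the Cochran-Orr-Teichner filtration are designed to detect. Because Cochran and Davis \cite{CD14} and the forthcoming Theorem $4.1$ already disprove the $g=1$ case, I expect the higher-genus statement to be false as well, and the honest output of the plan above is more likely a counterexample than a proof: a natural candidate is a suitably stabilized version of the knot $\R_1$ of Theorem $4.1$, where one hopes to verify that no metabolizing derivative on the stabilized Seifert surface is smoothly slice. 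The real difficulty is this last verification -- one must control \emph{every} metabolizer of the enlarged Seifert form simultaneously, since stabilization introduces many new candidate derivatives that mix the original metabolizer with the added handles, and the $g=1$ obstructions do not immediately rule these out.
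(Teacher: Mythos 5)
You should first be aware that the statement you were asked to prove is not a theorem of the paper at all: it is the higher-genus generalization of Kauffman's conjecture (Conjecture 1.3), which the paper explicitly leaves open (``It is still an open problem whether Kauffman's conjecture is true for knots with genus greater than $1$''), and whose genus-one case is actually \emph{false}, by Cochran--Davis \cite{CD14} and by the paper's own Theorem 4.1: the knot $\R_1$ is smoothly slice but has no slice derivative on its genus-one Seifert surface. So there is no proof in the paper to compare against, and your closing assessment --- that the plan is more likely to yield a counterexample than a proof --- is the correct reading of the situation and matches the paper's framing.

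That said, the positive steps of your sketch would not survive as a proof strategy even in outline. Your step two, passing from an algebraic metabolizer $H$ to disjoint smoothly embedded slice disks for the curves $\gamma_i$ in $\B$, is precisely the content of the conjecture: finger moves and Whitney moves do not produce smooth embeddings in dimension four, and the failure of exactly this step is what Casson--Gordon invariants, $\rho$-invariants, and the counterexamples of \cite{CD14} detect, so it cannot be cited as a step. Your step three is also not coherent in the paper's framework: an $n$-twist annulus modification as in Theorems 2.1 and 3.1 changes the ambient knot $\K$ into a new knot $\K_{(\phi_\A,n)}$ (and its Seifert surface and derivatives along with it); it cannot be used to hold $\K$ and $\F$ fixed while ``rendering each component of $\text{L}$ slice.'' Finally, the counterexample plan of stabilizing $\R_1$ runs into exactly the obstacle you name --- one must obstruct sliceness for \emph{every} metabolizer of the enlarged Seifert form, and stabilization always produces many new candidate derivatives --- and neither this paper nor the literature it cites carries that out; the genus $g>1$ question remains open.
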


It is still an open problem whether Kauffman's conjecture is true for knots with genus greater than $1$.

\subsection{Application 2 : Application related to an annulus twist}
An Annulus twist (see Section $5$) is an operation on $\CCC$ which was used in \cite{Os06}. Osoinach used annulus twists to produce $3$-manifolds that can be obtained by same coefficient Dehn surgery on an infinite family of distinct knots. In particular, Osoinach showed that if a knot $\K$ has an orientation preserving annulus presentation (see Section $5$), $0$-surgery on $\K_n$, where $\K_n$ is a knot obtained by an $n$-fold annulus twist on $\K$, is diffeomorphic to $0$-surgery on $\K$. Thus, if $\K$ is a smoothly slice knot with an orientation preserving annulus presentation, $\K_n$ is exotically slice for any integer $n$ \cite[Proposition $1.2$]{CFHeHo13}. This was also pointed out in \cite{AJOT13}, where they use annulus twists to produce an infinite family of distinct framed knots which represents a diffeomorphic $4$-manifold. In this paper, we will reprove the statement with slightly stronger assumptions, using $n$-twist annulus modifications. More precisely, we will show that $\K_n$ is exotically slice if $\K$ is a ribbon knot.

In fact, in \cite{AT14} Abe and Tange showed that if $\K$ is a ribbon knot with an annulus presentation with $+1$ or $-1$ framing (see Section $5$), $\K_n$ is smoothly slice. In this paper, we use $n$-twist annulus modifications to show this statement is true for a very specific case, namely when $\K$ is $8_{20}$ knot. Also, in \cite{AT14} they show that an $n$-fold annulus twist on $8_{20}$ are ribbon knot when $n\ge 0$, but it is still not known whether other slice knots obtained by an annulus twist are ribbon knots.

$ $

In the last section we consider $n$-twist annulus modifications on general annuli. More precisely, we no longer require the link $\{ \eta_1, \eta_2 \}$ to be isotopic to $L_l$ (see Figure $5$), which is one of the requirement for $(\{ \eta_1, \eta_2 \}, \phi_A)$ to be either $l$-nice or $l$-standard. By using these general annuli, we show that any exotically slice knots can be obtained by the image of the unknot in the boundary of a smooth $4$-manifold homeomorphic to $\B$ after an annulus modification. Notice that this tells us that for any two smoothly slice knots $\K_1$ and $\K_2$, it is possible to get from one to the other by performing two annulus modifications, if the $4$-dimensional smooth Poincar\'e Conjecture is true.

\subsection{Acknowledgements} The author would like to thank his advisors Tim Cochran and Shelly Harvey, and also Arunima Ray, and Christopher Davis for their helpful discussions.
\section{The Technique}\label{Construction}

In this section, we discuss the technique for constructing new slice knots from a fixed smoothly slice knot and slice disk. 

Let $\M$ be a smooth compact $4$-manifold with non empty boundary, and assume $\M$ is a integer homology four ball. Let $\phi_\A : \C \times [0, 1] \hookrightarrow \M$ be a smooth proper embedding of an annulus with $\text{Im}(\left. \phi_\A \right |_{\C \times \{ 0 \}})= \eta_1$, $\text{Im}(\left. \phi_A \right |_{\C \times \{ 1 \}})= \eta_2$, and $\text{Im}(\phi_\A)= \A$. Further assume $\{ \eta_1, \eta_2 \}$ is contained in some three ball in $\partial \M$ so it makes sense to talk about the linking number of $\eta_1$ and $\eta_2$. Then let $l=lk(\eta_1, \eta_2)$ and $n$ be any integer. Note that, $\phi_\A$ can be extended to a smooth proper embedding $\phi_{\N} : \C \times \D \times [0, 1] \hookrightarrow \M$ where $\text{Im}{(\phi_\N)}= \N$ where $\N$ is a tubular neighborhood of $\A$. Notice also that we can choose $\phi_\N$ so that $\C \times \{ 1 \} \times \{ 0 \}$ is identified with preferred longitude of $\eta_1$. Let $\lambda_1$ and $\lambda_2$ be preferred longitudes for $\eta_1$ and $\eta_2$ respectively, and let $\mu_1$ and $\mu_2$ be meridians of $\eta_1$ and $\eta_2$ respectively. Then $\phi_\N$ gives the following identifications:

\begin{itemize}
\item $\C \times \{ 1 \} \times \{ 0 \} \subseteq N$ is identified with $\lambda_1$.
\item $\{ 1 \} \times \partial \D \times \{ 0 \} \subseteq N$ is identified with $\mu_1$.
\item $\C \times \{ 1 \} \times \{ 1 \} \subseteq N$ is identified with $\lambda_2 + 2l\mu_2$.
\item $\{ 1 \} \times \partial \D \times \{ 1 \} \subseteq N$ is identified with $-\mu_2$.
\end{itemize}

Let $Aut(\C \times \C)$ be the set of isotopy classes of diffeomorphisms from $\C \times \C$ to itself. Recall there is a bijective correspondence between $Aut(\C \times \C)$ and $GL(2,\mathbb{Z})$ \cite{Ro90}. Then let $\rho_n$ be the element in $Aut(\C \times \C)$ which corresponds to 
$\begin{pmatrix}
  1 & n \\
  l & nl+1 
 \end{pmatrix}$, that is, $\rho_n$ sends the longitude to the longitude plus $l$ times the meridian and the meridian to $n$ times the longitude plus $nl+1$ times the meridian.
 
Let $\psi_n$ be the diffeomorphism defined as follows: 
$$\psi_n : \C \times \partial \D \times [0 , 1] \to \C \times \partial \D \times [0 , 1]$$ $$(x,y,t) \rightarrow (\rho_n(x,y),t) $$ where $x \in \C$, $y \in \partial \D$ and $t \in [0 , 1]$.

Using $\left. \phi_n \right |_{\C \times \partial \D \times [0 , 1]}$ and $\psi_n$, we define our modification on $\M$. Let $f_n := \left. \phi_\A \right |_{\C \times \partial \D \times [0 , 1]} \circ \psi_n$; note that $f_n$ is a diffeomorphism from $\C \times \partial \D \times [0 , 1]$ to itself. Then let $\M_{(\phi_\A,n)}=(\M - \N) \cup_{f_n} \C \times [0, 1] \times \D$. We will call $\M_{(\phi_\A,n)}$ \textit{the $n$-twist annulus modification on $\M$ at $\phi_\A$}. (This can be thought as doing Dehn surgery along the interval.)

We will perform annulus modifications on $\B$ to get new smoothly slice knots and new exotically slice knots.
First, we describe the basic idea. Fix a smoothly slice knot $\K$ with a smooth slice disk $\D$, in the 4-ball $\B$ from now on. Let $\text{N}$ be an smoothly embedded 4-manifold in $\B \setminus \ND$ (see Figure $4$). Let $\widetilde{B}$ be a new manifold obtained by taking out $\text{N}$ and glueing it back differently to the complement. Let $\widetilde{K} \subseteq \partial \widetilde{B}$ be the image of $\K$ in the modified manifold. Since $\widetilde{K}$ bounds a smoothly embedded disk in $\widetilde{B}$, if $\widetilde{B}$ is homeomorphic to $\B$, then the resulting new knot $\widetilde{K} \subseteq \partial \widetilde{B}$ is exotically slice, and if  $\widetilde{B}$ is diffeomorphic to $\B$, then the resulting new knot $\widetilde{K} \subseteq \partial \widetilde{B}$ is smoothly slice. In our case $N$ is going to be a tubular neighborhood of a particular proper smooth embedding of an annulus.

\begin{figure}[h]
\centering
\includegraphics[width=5in]{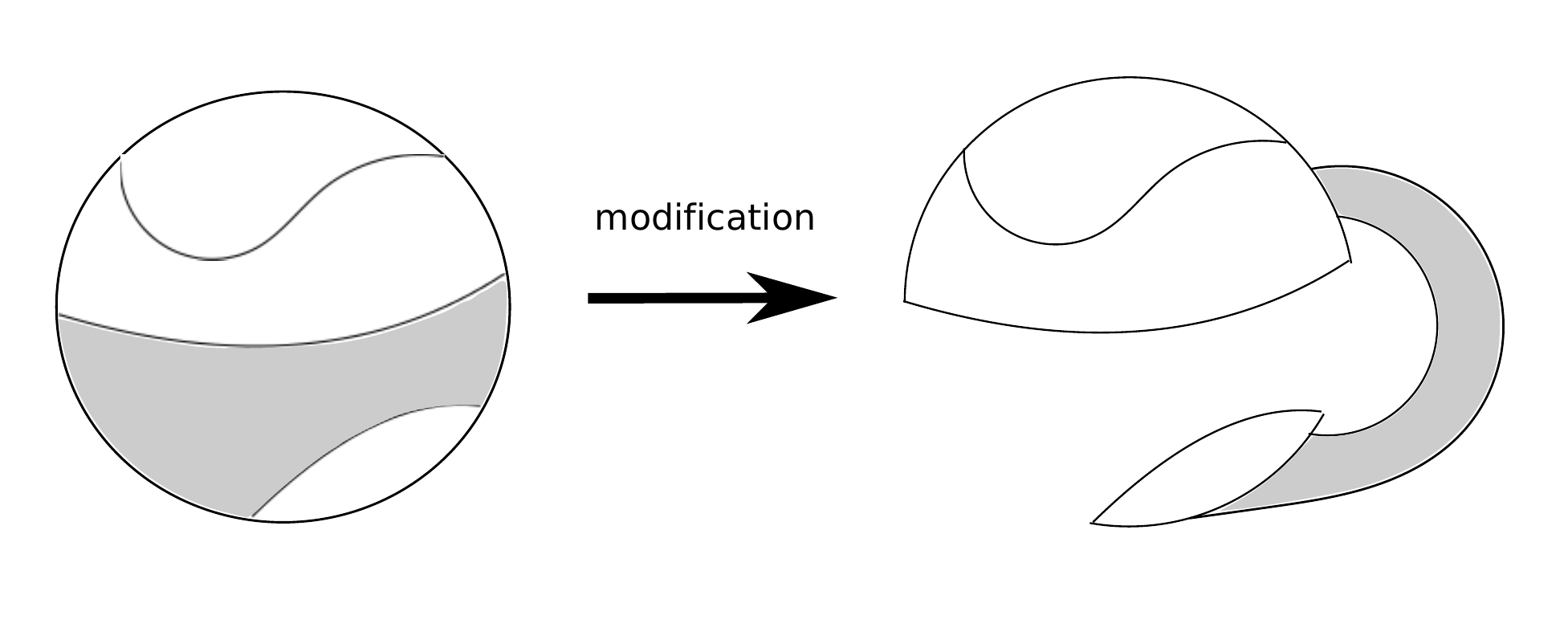}
  \put(-3.8,1.8){$\K$}
  \put(-5,1.3){$\D$}  
  \put(-4.3,0.64){$\text{N}$} 
  \put(-2,1.8){$\widetilde{\K}$} 
  \put(-4.1,0.10){$\B$} 
  \put(-1.2,0.10){$\widetilde{\B}$} 
\caption{Schematic of basic idea of the modification}
\end{figure}

To be more precise about the technique we will need some definitions.

\begin{definition}
Let $\{ \eta_1, \eta_2 \}$ be an oriented link in $\CCC - K$ and let $\phi_\A : \C \times [0, 1] \hookrightarrow \B$ be a smooth proper embedding of an annulus with $\text{Im}(\left. \phi_\A \right |_{\C \times \{ 0 \}})= \eta_1$, $\text{Im}(\left. \phi_\A \right |_{\C \times \{ 1 \}})= \eta_2$, $\text{Im}(\phi_\A)= \A$, and $l=lk(\eta_1,\eta_2)$.
We will say $(\{ \eta_1, \eta_2 \}, \phi_A)$ is \textit{$l$-nice} if it satisfies the following:
\begin{enumerate}
\item $\A \cap \D = \emptyset$
\item The link $\{ \eta_1, \eta_2 \}$ is isotopic to $L_l$ in $\CCC$ where $l$ is an integer and $L_l$ is the two component link described by black curves in Figure $5$.
\item $<[c]^n \cdot [\mu_1]> = <[\mu_1]>$ in $\pi_1(\B - \N)$ where $c$ is the knot disjoint from $L_l$ described by the red curve in Figure $5$ and $\mu_1$ is a meridian of $\eta_1$ as above. 
\end{enumerate}
\end{definition}

\begin{remark}$ $
\begin{enumerate}
\item Condition $(3)$ from Defnition 2.1 is technical condition imposed to make sure resulting manifold after performing an $n$-twist annulus modification is simply connected.
\item When the link $\{ \eta_1, \eta_2 \}$ is isotopic to $L_0$ in $\CCC$, the condition $(3)$ from Definition 2.1 is automatically satisfied. Note that $[c]$ represents trivial element in $\pi_1(\CCC - \text{N}(\{ \eta_1, \eta_2 \}))$, hence it represents trivial element in $\pi_1(\B - \N)$. So we have $<[c]^n \cdot [\mu_1]> = <[id]^n \cdot [\mu_1]> =<[\mu_1]>$ in $\pi_1(\B - \N)$.
\end{enumerate}
\end{remark}

\begin{figure}[h]
\centering
\includegraphics[width=3.1in]{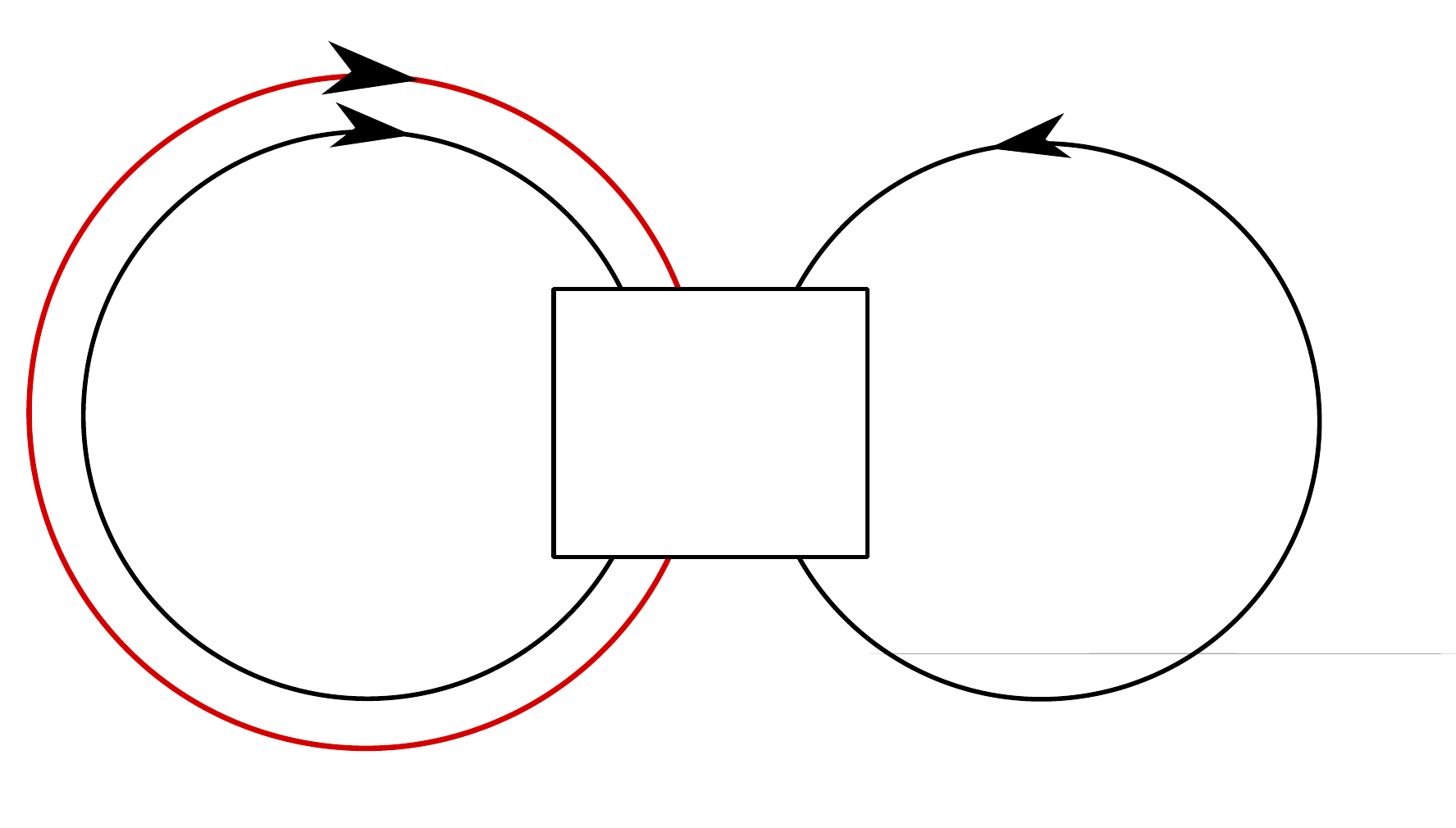}
  \put(-1.55,0.79){$l$}  
   \put(-2.3,0.4){$\eta_1$} 
   \put(-2.3,0.05){$c$}
   \put(-0.9,0.4){$\eta_2$} 
\caption{The link $L_l = \{\eta_1, \eta_2 \}$ where, the box containing $l$ represents the $l$ full twists, and the knot $c$}
\end{figure}

When $(\{ \eta_1, \eta_2 \}, \phi_A)$ is $l$-nice, performing an $n$-twist annulus modification on $\B$ along at $\phi_\A$ gives us the following main theorem.

\begin{theorem}\label{Main}Let $K$ be a smoothly slice knot bounding a smooth disk $\D$ in the standard $\B$, and let $(\{ \eta_1, \eta_2 \}, \phi_A)$ be $l$-nice. Then $\frac{nl+1}{n}$ Dehn surgery on $\eta_1$ followed by $\frac{nl-1}{n}$ Dehn surgery on $\eta_2$ will produce an exotically slice knot $\K_{(\phi_\A,n)} \subseteq \CCC$ for any integer $n$, where $\K_{(\phi_\A,n)}$ is the image of $\K$ in the new $3$-manifold $\CCC$.\end{theorem}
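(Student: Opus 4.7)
My proof plan has three steps, applied to the decomposition $\B_{(\phi_\A,n)} = (\B - \N) \cup_{f_n} (\C \times [0,1] \times \D)$ along $T^2 \times I$. For the boundary surgery description, I would apply $\rho_n$ to the meridian $\mu$ of the newly glued piece at each end and translate via the four identifications given by $\phi_\N$: at $t=0$, $\rho_n(\mu) = n\lambda + (nl+1)\mu$ becomes $n\lambda_1 + (nl+1)\mu_1$, giving $\frac{nl+1}{n}$-surgery on $\eta_1$; at $t=1$, substituting $\lambda \leftrightarrow \lambda_2 + 2l\mu_2$ and $\mu \leftrightarrow -\mu_2$ turns $\rho_n(\mu)$ into $n(\lambda_2 + 2l\mu_2) - (nl+1)\mu_2 = n\lambda_2 + (nl-1)\mu_2$, giving $\frac{nl-1}{n}$-surgery on $\eta_2$.

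For the claim that $\B_{(\phi_\A,n)}$ is homeomorphic to $\B$ (needed since \emph{exotically slice} requires an ambient $\M$ homeomorphic to $\B$), I would use Mayer--Vietoris and Van Kampen on the above decomposition. The gluing acts on $H_1(T^2) = \mathbb{Z}\langle\lambda,\mu\rangle$ by $\rho_n \in SL(2,\mathbb{Z})$, since $\det\rho_n = (nl+1)-nl=1$; comparison with the identity gluing (which returns $\B$) shows $\B_{(\phi_\A,n)}$ has the homology of $\B$ and $\partial\B_{(\phi_\A,n)}$ is a homology $3$-sphere. For $\pi_1$, Van Kampen collapses the longitudinal generator of the new piece and yields
\[
\pi_1(\B_{(\phi_\A,n)}) \;=\; \pi_1(\B - \N)\big/\langle\langle f_n(\mu) \rangle\rangle.
\]
The element $f_n(\mu)$ represents $[\lambda_1]^n[\mu_1]^{nl+1}$ in $\pi_1(\partial\N)$, and identifying $[\lambda_1]$ in $\pi_1(\B - \N)$ with the curve $[c]$ of Figure 5 rewrites this, modulo $\langle\langle[\mu_1]\rangle\rangle$, as $[c]^n[\mu_1]$. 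Because the identity gluing of $\N$ back into $\B-\N$ recovers $\B$, one has $\langle\langle[\mu_1]\rangle\rangle = \pi_1(\B - \N)$; then condition (3) of $l$-nice, $\langle\langle[c]^n[\mu_1]\rangle\rangle = \langle\langle[\mu_1]\rangle\rangle$, forces $\langle\langle f_n(\mu)\rangle\rangle = \pi_1(\B - \N)$ and hence $\pi_1(\B_{(\phi_\A,n)}) = 1$. A parallel $3$-dimensional argument yields $\pi_1(\partial\B_{(\phi_\A,n)}) = 1$, so Perelman identifies $\partial\B_{(\phi_\A,n)}$ with $\CCC$ and Freedman identifies $\B_{(\phi_\A,n)}$ with $\B$ topologically.

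For the final step, condition (1) of $l$-nice gives $\A \cap \D = \emptyset$, so $\D$ lies in $\B - \N \subset \B_{(\phi_\A,n)}$ and is still a smoothly embedded disk bounded by the image $\K_{(\phi_\A,n)} \subset \partial\B_{(\phi_\A,n)} \cong \CCC$, exhibiting $\K_{(\phi_\A,n)}$ as exotically slice.

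The main obstacle is the $\pi_1$ computation: the longitude $\lambda_1$ bounds in $\B$ but not in $\B - \N$, and its class in $\pi_1(\B - \N)$ generally differs from $[c]$. Condition (3) is phrased in terms of $c$ rather than $\lambda_1$ precisely so that this discrepancy (which lies in $\langle\langle[\mu_1]\rangle\rangle$) does not obstruct the normal-closure computation; carefully pinning down $[\lambda_1]\equiv[c]$ modulo $\langle\langle[\mu_1]\rangle\rangle$ via the explicit geometry of $L_l$ in Figure 5 is where the proof concentrates its technical work.
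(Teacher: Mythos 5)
Your overall route is the same as the paper's: the same decomposition of $\B_{(\phi_\A,n)}$, the same Seifert--van Kampen computation killing the image of the new meridian, the same coefficient calculation $\frac{nl+1}{n}$, $\frac{nl-1}{n}$, and Freedman at the end. The genuine gap is your treatment of the boundary. The theorem (and the definition of exotically slice, and the application of \cite[Theorem 1.6]{Fr84}) requires that $\partial \B_{(\phi_\A,n)}$ actually be $\CCC$, and your proposed ``parallel $3$-dimensional argument'' for $\pi_1(\partial \B_{(\phi_\A,n)})=1$ does not exist in the form you suggest: the $3$-dimensional computation lives in $\pi_1(\CCC - \text{N}(\{\eta_1,\eta_2\}))$ with \emph{two} filling relators, and condition $(3)$ of $l$-nice is a statement about $\pi_1(\B-\N)$, which is a proper quotient-type target (the inclusion-induced map only adds relations), so triviality of the $4$-dimensional quotient gives no control on the surgered $3$-manifold. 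What actually pins down the boundary is condition $(2)$: since $\{\eta_1,\eta_2\}$ is isotopic to the explicit link $L_l$, the boundary is surgery on that specific link with the computed coefficients, and the paper verifies directly, by a $-l$ Rolfsen twist on the auxiliary $\infty$-framed circle followed by $\mp n$ Rolfsen twists on $\eta_1,\eta_2$ (Figure 6), that the result is $\CCC$; no appeal to Perelman is needed, and without some such explicit computation your proof of this step is missing.

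There is also a soft spot in how you frame the $\pi_1$ step. You propose to rewrite $f_n(\mu)$ as $[c]^n[\mu_1]$ only ``modulo $\langle\langle[\mu_1]\rangle\rangle$,'' but that is not enough: the whole point of the computation is to identify $\langle\langle f_n(\mu)\rangle\rangle$, so you are not entitled to work modulo $\langle\langle[\mu_1]\rangle\rangle$ first---if $f_n(\mu)$ agreed with $[c]^n[\mu_1]$ only up to an element of $\langle\langle[\mu_1]\rangle\rangle$, you could not conclude $\langle\langle f_n(\mu)\rangle\rangle=\langle\langle[c]^n[\mu_1]\rangle\rangle=\langle\langle[\mu_1]\rangle\rangle$. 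Fortunately no approximation is needed, and this is exactly how the paper argues: in Figure 5 the curve $c$ is the $l$-framed pushoff of $\eta_1$ in the exterior of $L_l$, so $[c]=[\mu_1]^{l}[\lambda_1]$ holds exactly in $\pi_1(\CCC - \text{N}(\{\eta_1,\eta_2\}))$ and hence in $\pi_1(\B-\N)$; since $\lambda_1$ and $\mu_1$ commute in the image of the boundary torus, $f_n(\mu)=[\lambda_1]^n[\mu_1]^{nl+1}=([\mu_1]^{l}[\lambda_1])^n[\mu_1]=[c]^n[\mu_1]$ on the nose, and condition $(3)$ then gives $\pi_1(\B_{(\phi_\A,n)})=1$. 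So this second issue is fixable, but as written the logic does not close; the first issue (identifying the boundary with $\CCC$) is the step you genuinely need to supply.
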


\begin{proof}
We will perform an $n$-twist annulus modification on $\B$ at $\phi_\A$ to get $\B_{(\phi_\A,n)}$.

It is easy to check that $\B_{(\phi_\A,n)}$ is a homology $\B$ by using a Mayer-Vietoris sequence. We omit this detail.

We need to show that $\B_{(\phi_\A,n)}$ is simply connected. We will use Seifert-van Kampen theorem to see $\B_{(\phi_\A,n)}$ is simply connected. First we apply it to $\B$ to get the following equations where $i_1$ is natural inclusion of $\C \times \partial \D \times [0 , 1]$ into $\B - \N$ and $i_2$ is natural inclusion of $\C \times \partial \D \times [0 , 1]$ into $\N$.
\begin{align*}
 \{id\} = \pi_1(\B) &= \frac{\pi_1(\B - \N) * \pi_1(\N)}{<(i_1)_*([\mu_1])=(i_2)_*([\mu_1]),(i_1)_*([\lambda_1])=(i_2)_*([\lambda_1])>}\\
 &= \frac{\pi_1(\B - \N)}{<(i_1)_*([\mu_1])>} \text{ (Since, } \pi_1(\N) = \mathbb{Z} \text{ is generated by } (i_2)_*([\lambda_1]) \text{.)}
\end{align*}

Hence, we have $\pi_1(\B - \N) / <(i_1)_*([\mu_1])> = \{id\}$. Now we apply Seifert-van Kampen theorem to $\B_{(\phi_\A,n)}$. Recall that $f_n$ was a map from $\C \times \partial \D \times [0 , 1]$ to itself from above and let $y=[(i_2)_*([\mu_1])]$ be a generator of of $\pi_1(\N) = \mathbb{Z}$.

\begin{align*}
\pi_1(\B_{(\phi_\A,n)}) &= \frac{\pi_1(\B - \N) * \pi_1(\N)}{<(i_1)_*([\mu_1])=(i_2 \circ {f_n}^{-1})_*([\mu_1]),(i_1)_*([\lambda_1])=(i_2 \circ {f_n}^{-1})_*([\lambda_1])>}\\
  &= \frac{\pi_1(\B - \N)  \text{ }* <y>}{<(i_1)_*([\mu_1])=y^{-n},(i_1)_*([\lambda_1])=y^{nl+1}>}\\
  &= \frac{\pi_1(\B - \N)}{<({(i_1)_*([\mu_1])}^{l} \cdot (i_1)_*([\lambda_1]))^{n} \cdot (i_1)_*([\mu_1])>}\\
  &= \frac{\pi_1(\B - \N)}{<([c])^{n} \cdot (i_1)_*([\mu_1])>} \text{ (Since, } [c] = {(i_1)_*([\mu_1])}^{l} \cdot (i_1)_*([\lambda_1]) \text{ in } \pi_1(\CCC - \text{N}(\{ \eta_1, \eta_2 \})) \text{.)}\\
  &= \frac{\pi_1(\B - \N)}{<(i_1)_*([\mu_1])>}\text{ (By the condition (3) in Definition 2.1.)}\\
  &= \{id\} \text{ (By observation above.)}
\end{align*}
This shows  $\B_{(\phi_\A,n)}$ is simply connected as we needed.

What is now left to do is to understand what happens on the boundary. Notice that $\partial \B_{(\phi_\A,n)}$ is the result of Dehn surgeries on $\eta_1$ and $\eta_2$, since we are simply removing two solid tori from $\partial \B$ and glueing them back differently. Hence it is enough to calculate the coefficient on both curves to specify the boundary. We are using $f_n$ to glue $\C \times \D \times [0, 1]$ to $\B - \N$, so we have the following identifications:

\begin{itemize}
\item $\C \times \{ 1 \} \times \{ 0 \}$ is identified with $\lambda_1+l\mu_1$.
\item $\{ 1 \} \times \partial \D \times \{ 0 \}$ is identified with $n\lambda_1+(nl+1)\mu_1$.
\item $\C \times \{ 1 \} \times \{ 1 \}$ is identified with $\lambda_2 + l\mu_2$.
\item $\{ 1 \} \times \partial \D \times \{ 1 \}$ is identified with $n\lambda_2+(nl-1)\mu_2$.
\end{itemize}

Recall that $\lambda_1$ and $\lambda_2$ are the preferred longitudes of $\eta_1$ and $\eta_2$ respectively, and $\mu_1$ and $\mu_2$ are meridians of $\eta_1$ and $\eta_2$ respectively. So that meridian of $\C \times \D \times \{ 0 \}$ is identified with $n\lambda_1+(nl+1)\mu_1$ and meridian of $\C \times \D \times \{ 1 \}$ is identified with $n\lambda_2+(nl-1)\mu_2$. This shows that $\frac{nl+1}{n}$ is the coefficient for $\eta_1$ and $\frac{nl-1}{n}$ for $\eta_2$ which implies $\partial \B_{(\phi_\A,n)}$ is the top left picture in Figure $6$. By doing Rolfsen twists, it is easy to check $\partial \B_{(\phi_\A,n)}$ is $\CCC$, which is described in Figure $6$.

\begin{figure}[h]
\centering
\includegraphics[width=5in]{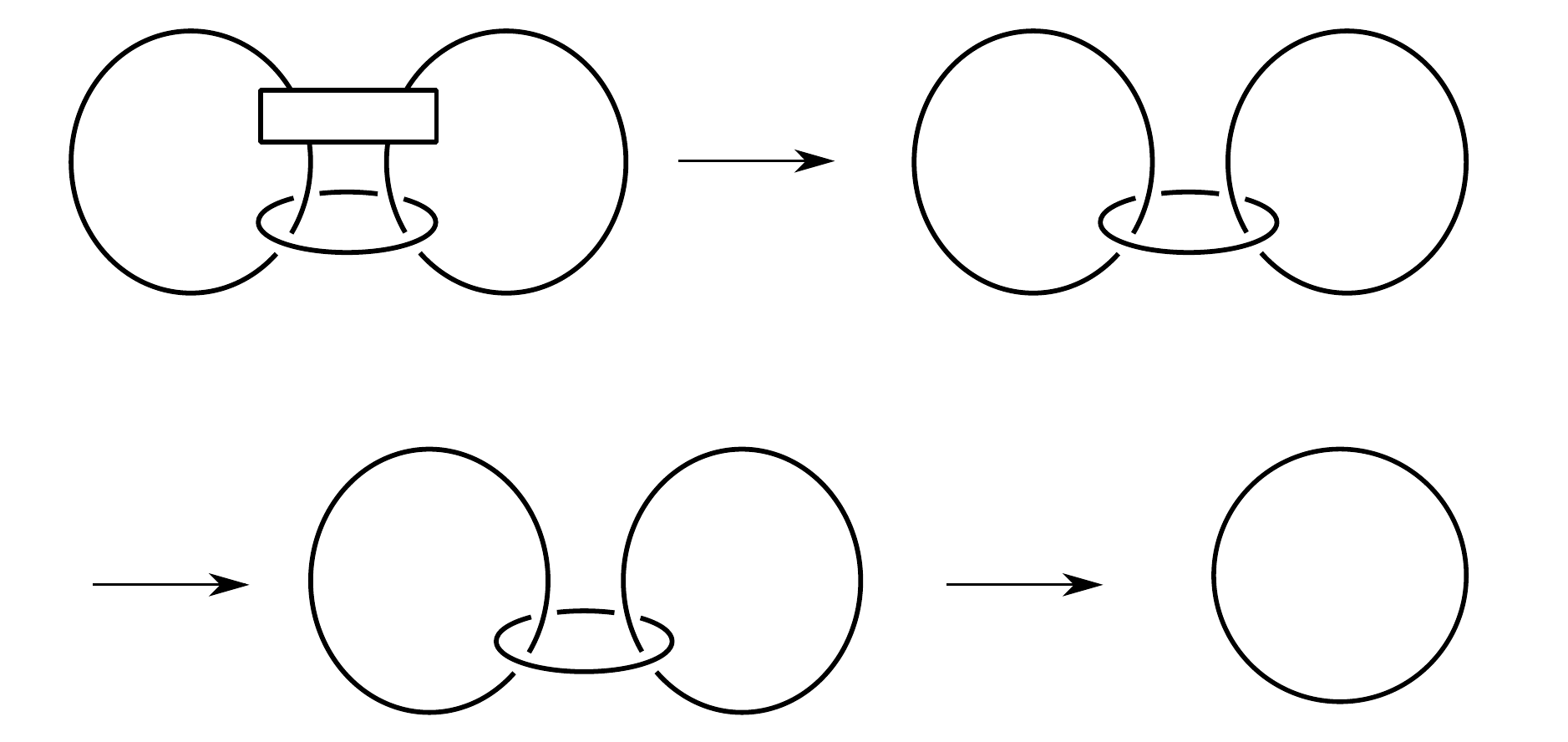}
  \put(-5,2.3){$\frac{nl+1}{n}$}
  \put(-3.2,2.3){ $\frac{nl-1}{n}$}  
    \put(-4.7,1.4){$\eta_1$}
  \put(-3.2,1.4){ $\eta_2$}  
  \put(-3.97,1.94){ $l$}  
  \put(-3.97,1.39){$\infty$} 
  \put(-2,2.3){$\frac{1}{n}$} 
  \put(-0.5,2.3){$\frac{-1}{n}$} 
  \put(-1.3,1.39){$\frac{-1}{l}$}
  \put(-4.1,0.8){$\infty$} 
  \put(-2.3,0.8){$\infty$} 
  \put(-3.2,0.05){$\frac{-1}{l}$} 
  \put(-1.2,0.10){$\frac{-1}{l}$} 
    \put(-0.2,0.46){$=\CCC$} 
\caption{The first arrow is given by a $-l$ Rolfsen twist on the $\infty$ circle. The second arrow is given by a $-n$ Rolfsen twist on $\eta_1$ and $n$ a Rolfsen twist on $\eta_2$. The third arrow is given by deleting a component with coefficient $\infty$.}
\end{figure}

Thus by the 4-dimensional topological Poincar\'e conjecture we can conclude that $\B_{(\phi_\A,n)}$ is homeomorphic to $\B$ \cite[Theorem $1.6$]{Fr84}, which implies that $\K_{(\phi_\A,n)}$ is exotically slice for any integer $n$.\end{proof}

\section{Special Case}\label{special case}
In this section we will discuss a special case of Section $2$, which guarantees that the resulting manifold $\B_{(\phi_\A,n)}$ is diffeomorphic to $\B$.

\begin{figure}[h]
\centering
\includegraphics[width=2in]{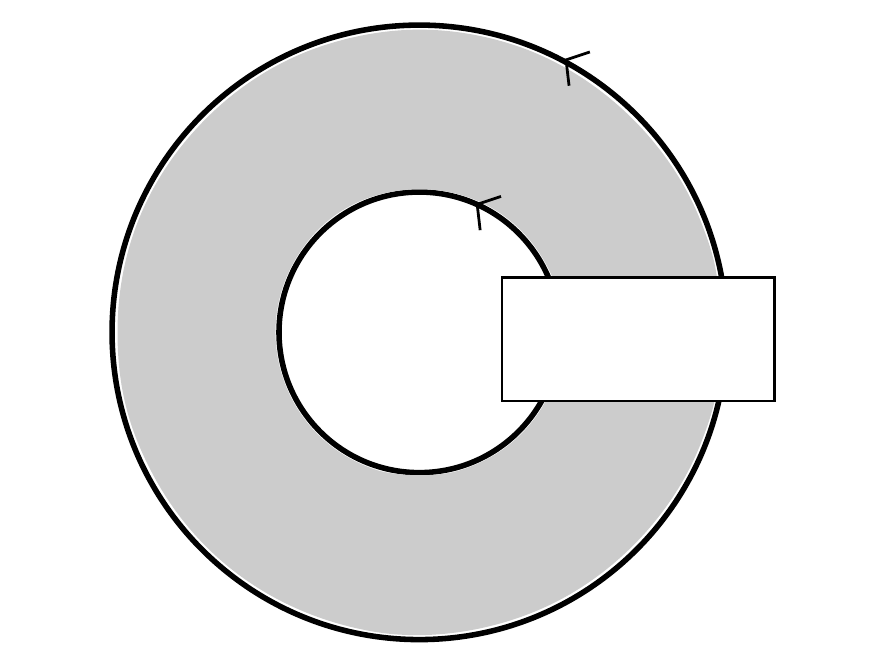}
  \put(-0.6,0.7){$l$}
    \put(-1.1,0.85){$\eta_1$}
      \put(-0.5,1.3){$\eta_2$}  
\caption{The annulus $\A_l$}
\end{figure}

\begin{definition}
Let $\phi_{\A_l} : \C \times [0, 1] \hookrightarrow \B$ be a smooth proper embedding of an annulus with $\text{Im}(\phi_{\A_{l}})= \A_{l}$ where $\A_l$ is obtained by pushing in the interior of the annulus, described in Figure $7$. 
We will call $(\{ \eta_1, \eta_2 \}, \phi_A)$ \textit{$l$-standard} if $(\{ \eta_1, \eta_2 \}, \phi_A)$ is $l$-nice and, further, if the annulus $\A$ that is bounded by $\eta_1$ and $-\eta_2$ is smoothly isotopic through proper embeddings to $\A_l$. 
\end{definition}

\begin{remark} When the link $\{ \eta_1, \eta_2 \}$ is isotopic to link $L_l$ and if it bounds an annulus $\A$, smoothly isotopic through proper embeddings to $\A_l$, then the condition $(3)$ from Definition 2.1 is automatically satisfied. Note that the curve $c$ bounds a smoothly embedded disk in $\B-\N$ which is described in Figure $8$. Hence $[c]$ represents a trivial element in $\pi_1(\B-\N)$ and we see that the condition is satisfied.
\end{remark}

\begin{figure}[h]
\centering
\includegraphics[width=5.5in]{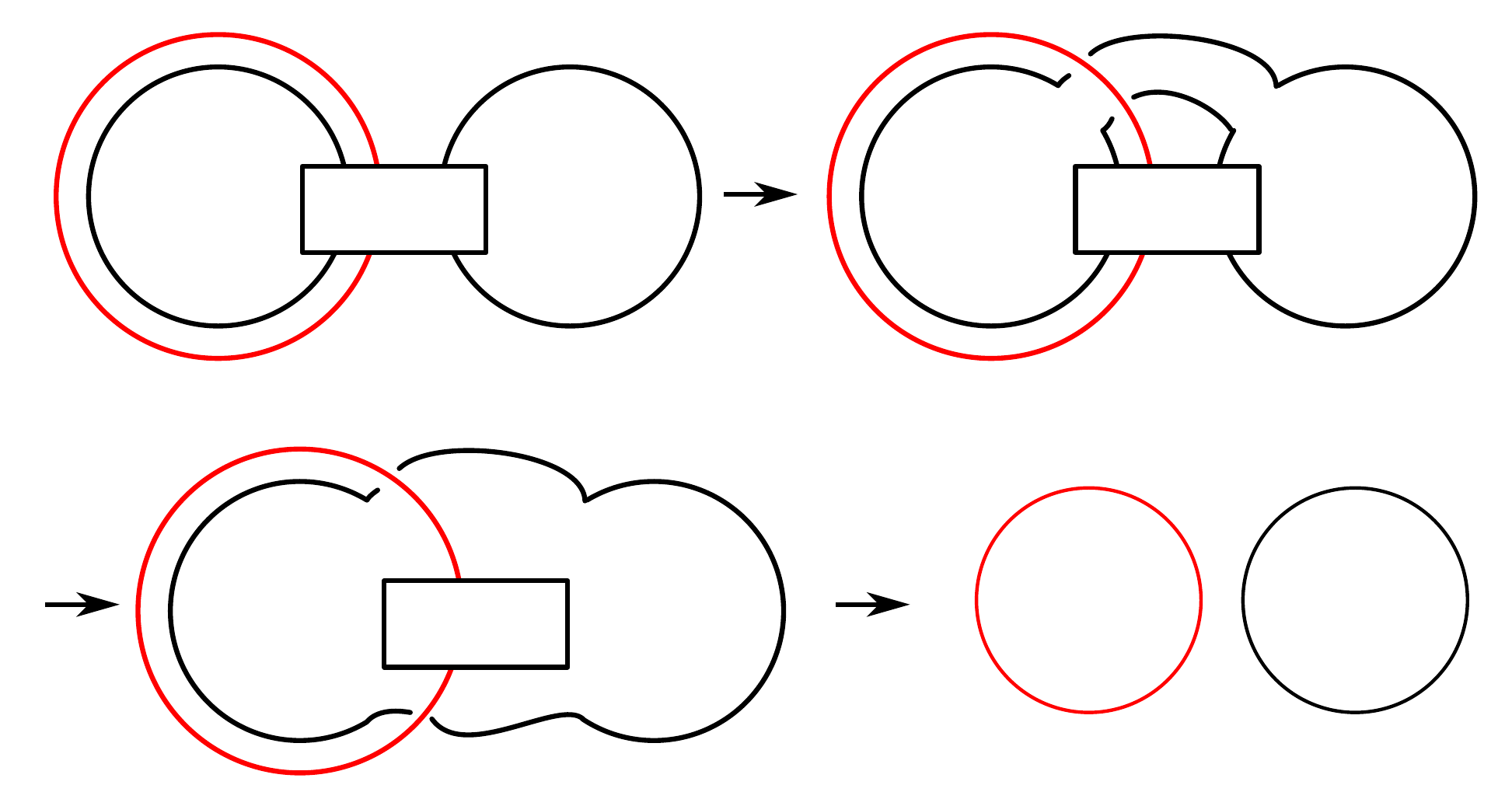}
  \put(-4.07,2.1){$l$}  
   \put(-1.25,2.1){$l$}  
      \put(-3.8,0.6){$l$}
   \put(-4.7,1.9){$\eta_1$} 
   \put(-3.5,1.9){$\eta_2$} 
   \put(-4.7,1.45){$c$}
   \put(-1.85,1.45){$c$}
      \put(-4.4,-0.1){$c$}
   \put(-1.55,0.1){$c$}
\caption{Top left figure is the link $\{\eta_1, \eta_2\}$ and the knot $c$. The second picture is obtained by performing band sum between $\eta_1$ and $\eta_2$. The third and fourth pictures are obtained by isotopy of the black curve. Note that the knot $c$ becomes completely disjoint from the annulus $\A$ after the band sum between $\eta_1$ and $\eta_2$.}
\end{figure}

Then we have the following theorem. Note that this is an analogue of Theorem $3.1$ in \cite{CD14}.

\begin{theorem}\label{Diffeo}Let $\K$ be a smoothly slice knot bounding a smooth disk $\D$ in the standard $\B$, and let $(\{ \eta_1, \eta_2 \}, \phi_A)$ be $l$-standard. Then $\frac{nl+1}{n}$ Dehn surgery on $\eta_1$ followed by $\frac{nl-1}{n}$ Dehn surgery on $\eta_2$ will produce an smoothly slice knot $\K_{(\phi_\A,n)}\subseteq \CCC$ for any integer $n$, where $\K_{(\phi_\A,n)}$ is the image of $\K$ in the new $3$-manifold $\CCC$.\end{theorem}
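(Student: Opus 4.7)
Since any $l$-standard pair is in particular $l$-nice (Remark 3.2), Theorem 2.1 already provides a homeomorphism $\B_{(\phi_\A,n)} \cong \B$ together with a smoothly embedded disk inside it bounded by $\K_{(\phi_\A,n)}$. The only remaining task is to upgrade the homeomorphism to a diffeomorphism; once that is done, the same smooth disk witnesses that $\K_{(\phi_\A,n)}$ is smoothly slice in the standard $\B$. The plan is a two-step argument: first, use the $l$-standard hypothesis to reduce to the model annulus $\A_l$ by isotopy extension, and then verify the model case by an explicit Kirby calculus cancellation mirroring the boundary moves of Figure 6.

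For the reduction step, the $l$-standard condition supplies a smooth proper isotopy of $\A$ to $\A_l$ through properly embedded annuli. The smooth isotopy extension theorem lifts this to an ambient isotopy of $\B$ that carries $\N$ onto a tubular neighborhood of $\A_l$ and intertwines the framing data defining $\phi_\N$ with the corresponding data for $\A_l$ (any residual normal-bundle ambiguity is absorbed by a self-diffeomorphism of $\C \times \D$ isotopic to the identity, which leaves the gluing map $f_n$ unchanged up to isotopy). The resulting ambient diffeomorphism then identifies $\B_{(\phi_\A,n)}$ with $\B_{(\phi_{\A_l},n)}$, reducing the theorem to the model assertion $\B_{(\phi_{\A_l},n)} \cong \B$.

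For the model assertion, observe that $\A_l$ is the push-in of a visible annulus in $\partial\B$, so both $\A_l$ and its tubular neighborhood $\text{N}(\A_l)$ can be arranged to lie inside a collar $\partial\B \times [0,\epsilon]$ of $\partial\B$. The $n$-twist annulus modification is therefore supported entirely in this collar, and the question becomes whether the modified collar is again a collar on $\CCC$. I would verify this by executing the Rolfsen twist sequence of Figure 6 as four-dimensional handle moves: the curve $c$ bounds an embedded disk in $\B - \text{N}(\A_l)$ by Remark 3.3 and Figure 8, which realizes the first Rolfsen twist as a handle slide, and the standard annulus $\A_l$ itself supplies the sliding disks for $\eta_1$ and $\eta_2$. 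After these slides the two $2$-handles added by the modification cancel, producing a diffeomorphism $\B_{(\phi_{\A_l},n)} \cong \B$. Combining with Step 1, and using $\A \cap \D = \emptyset$ to arrange all isotopies and handle moves disjoint from $\D$, the image of $\D$ under the overall diffeomorphism is a smooth slice disk for $\K_{(\phi_\A,n)}$ in the standard $\B$.

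The hard part will be the model assertion, and specifically the claim that the Rolfsen twist sequence of Figure 6, which a priori only simplifies the boundary, extends to an honest four-dimensional diffeomorphism of the interior when the annulus is standard. The standardness hypothesis enters precisely here: it provides the embedded disks bounded by $c$, $\eta_1$, and $\eta_2$ needed to perform and cancel the handle slides. Without standardness no such disks need exist, which matches the fact that Theorem 2.1 alone yields only an a priori exotic ball.
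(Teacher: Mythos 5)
Your reduction step is fine and is exactly the paper's: $l$-standardness gives a proper isotopy from $\A$ to $\A_l$, and the ambient isotopy extension theorem yields $\B_{(\phi_\A,n)} \cong \B_{(\phi_{\A_l},n)}$. The gap is in the model case, which you correctly identify as the hard part but do not actually prove. First, the modification is not the attachment of ``two $2$-handles'': gluing $\C \times [0,1] \times \D$ back to $\B - \N$ along $\C \times \partial\D \times [0,1]$ amounts, relative to the complement, to attaching one $2$-handle and one $3$-handle (a round $2$-handle), so there is no pair of $2$-handles available to cancel as stated. Second, $\A_l$ cannot ``supply the sliding disks'' for $\eta_1$ and $\eta_2$: its neighborhood has been removed, so it is not present in $\B - \N$. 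Third, and most seriously, Rolfsen twists are moves on surgery presentations of $3$-manifolds; they are not Kirby moves and in general change the bounding $4$-manifold, and the entire content of the theorem is precisely the assertion that for the standard annulus the $3$-dimensional simplification of Figure 6 is realized by a $4$-dimensional diffeomorphism. Saying that the disk bounded by $c$ ``realizes the first Rolfsen twist as a handle slide'' assumes what must be proved; in the paper that disk is used only to control $\pi_1$ (condition (3) of Definition 2.1), and no handle decomposition of $\B_{(\phi_{\A_l},n)}$, framing bookkeeping, or explicit slide/cancellation sequence is given. Your collar observation does not help either: it merely restates the problem, since a simply connected homology cobordism from $\CCC$ to $\CCC$ supported in a collar need not be smoothly a product--that is exactly the potential exoticness Theorem 2.1 leaves open.

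The paper closes this gap by a mechanism your proposal has no counterpart to. For $l=0$ one views $\B = \text{B}^3 \times [0,1]$, notes $\A_0$ is isotopic to $U \times [0,1]$, and sees the modification level-wise as $\frac{1}{n}$-surgery on an unknot in $\text{B}^3$, hence a product again; for general $l$ one introduces the $l$-disk modification (carving out a trivial properly embedded disk and attaching an $l$-framed $2$-handle, a cancelling $1$-/$2$-handle pair) and plays two commuting modifications on the disjoint standard pieces $\widetilde{\A}$ and $\widetilde{D}$ against each other: one order of operations yields the standard $\B$, the other yields $\B_{(\phi_{\A_l},n)}$, whence $\B_{(\phi_{\A_l},n)} \cong \B$. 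Unless you either reproduce an argument of this kind or genuinely carry out an explicit handle-calculus proof (with an honest decomposition of $\B - \text{N}(\A_l)$ and of the reglued round handle, plus verified slides and cancellations), the key step $\B_{(\phi_{\A_l},n)} \cong \B$ remains unproved, and with it the theorem.
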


\begin{proof}
By Theorem $2$, the only thing that we need to show is that $\B_{(\phi_\A,n)}$ is diffeomorphic to the standard $\B$ and not just homeomorphic for any integer $n$, if $(\{ \eta_1, \eta_2 \}, \phi_A)$ is $l$-standard.

Note that if $\phi_\A$ and $\phi_{\A'}$ are smooth proper embedding of annulus into $\B$ that are smoothly isotopic through proper embeddings, then $\B_{(\phi_\A,n)}$ is diffeomorphic to $\B_{(\phi_{\A'},n)}$. This can be easily checked by the ambient isotopy theorem \cite[Chapter $8$, Theorem $1.3$]{Hir94}.

By using this we will first show $\B_{(\phi_\A,n)}$ is diffeomorphic to the standard $\B$ when $(\{ \eta_1, \eta_2 \}, \phi_A)$ is $0$-standard. We can think of $\B$ as $\text{B}^3 \times [0 , 1]$, so we have a natural smooth proper embedding of an annulus $\text{U} \times [0 , 1] \subseteq \text{B}^3 \times [0 , 1] = \B$, where $\text{U}$ is the unknot. Then observe $\text{U} \times [0 , 1] \subseteq \B$ is isotopic to $\A_0$; one could visualize this by pulling boundary of $\text{U} \times [0 , 1]$ to $\partial \text{B}^3 \times [0 , 1]$. Hence we can conclude that $\B_{(\phi_{\A_0},n)}$ is diffeomorphic to $\text{B}^3_{\frac{1}{n}} \times [0 , 1] = \text{B}^3 \times [0 , 1] = \B$, where $\text{B}^3_{\frac{1}{n}}$ is ${\frac{1}{n}}$ Dehn surgery along the unknot. Thus, $\B_{(\phi_\A,n)}$ is diffeomorphic to standard $\B$ for any integer $n$ when $(\{ \eta_1, \eta_2 \}, \phi_A)$ is $0$-standard.

For $l\neq 0$, we need to define one more modification. Let $\M$ be a compact $4$-manifold with non-empty boundary, and assume $\M$ is a integer homology four ball. Let $\phi_D : \D \hookrightarrow \M$ be a smooth proper embedding of a disk with $\text{Im}(\phi_D)= D$. Then carve out a tubular neighborhood of $D$ and attach a $2$-handle along the meridian of $\partial D$ with framing $l$. We will call this a \textit{$l$-disk modification on $\M$ at $\phi_D$} and denote the resulting manifold as $D_l(M)$. 

Let $D_0$ be a disk which is obtained by pushing in the interior of the smoothly embedded disk in $\CCC$ to the standard $\B$. Notice that if $D$ is a proper embedding of a disk in $\B$ and if it is isotopic through proper embedding to $D_0$, then a $l$-disk modification on $\B$ is simply adding a canceling $1$-handle / $2$-handle pair which does not change the $4$-manifold.

We will fix two particular disjoint proper smooth embeddings $\phi_{\widetilde{A}}$ and $\phi_{\widetilde{D}}$ which are described in Figure $9$. We will denote $\text{Im}(\phi_{\widetilde{\A}})= \widetilde{\A}$ and $\text{Im}(\phi_{\widetilde{D}})= \widetilde{D}$. Notice $\widetilde{\A}=U \times [0,1] \subseteq \text{B}^3 \times [0 , 1]$ where $U$ is the unknot, and $\widetilde{D}$ is $\text{Arc} \times [0,1] \subseteq \text{B}^3 \times [0 , 1]$. We will do two modifications on $\B$: first an $n$-twist annulus modification along $\phi_{\widetilde{\A}}$ and second an $l$-disk modification at $\phi_{\widetilde{D}}$. Notice that the order of these modifications does not matter so we have the commutative diagram shown in Figure $10$, we have $\widetilde{D}_l(\B_{(\phi_{\widetilde{\A}},n)})=\widetilde{D}_l(\B)_{(\phi_{\widetilde{\A}},n)}$.

\begin{figure}[h]
\centering
\includegraphics[width=4in]{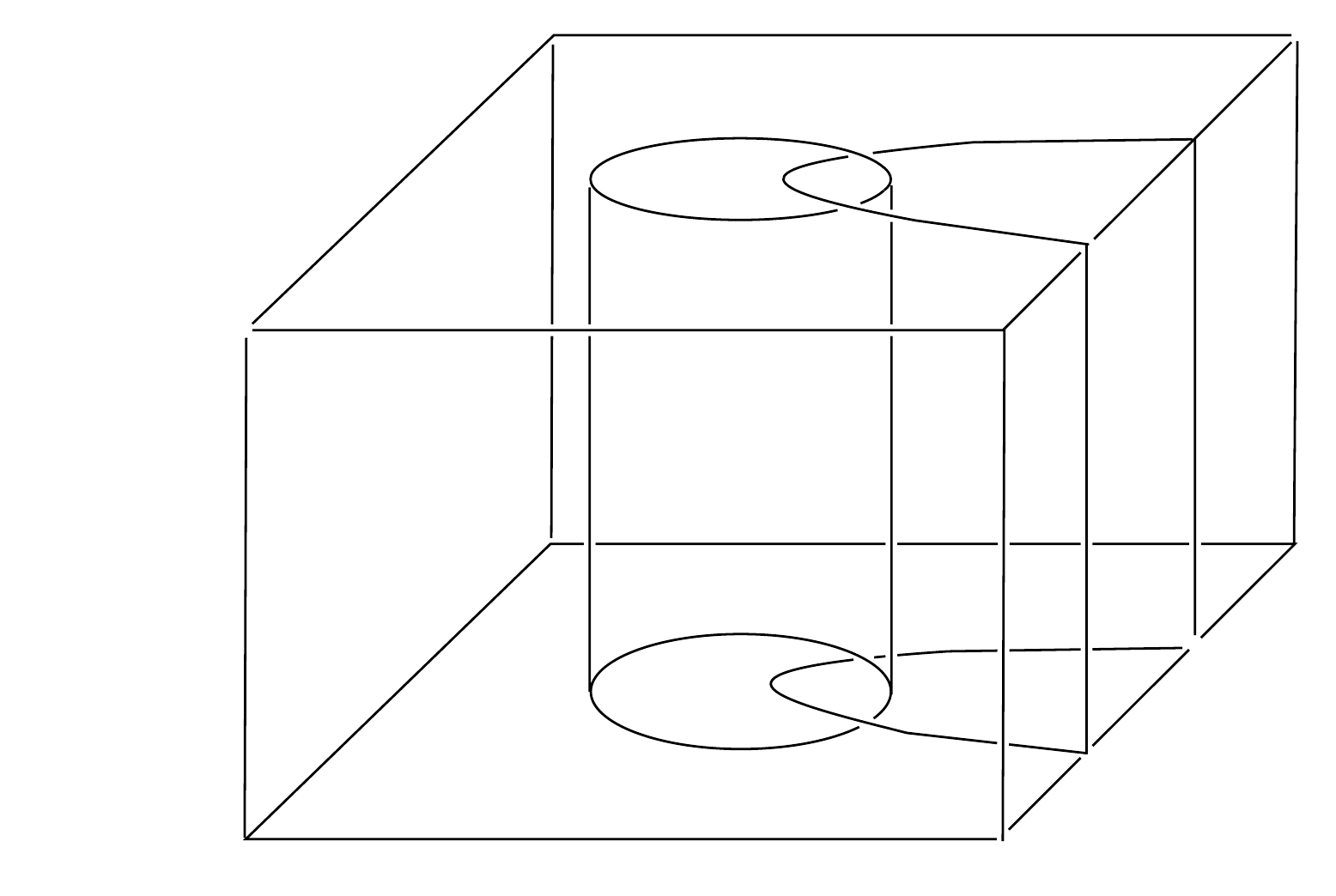}
  \put(-4.6,1){ $\B = \text{B}^3 \times [0 , 1] =$}
  \put(-2,2.4){$U$}  
  \put(-0.8,2.4){Arc}  
    \put(-2.2,0.3){$\widetilde{\A}$}  
  \put(-0.6,0.3){$\widetilde{D}$}  
\caption{Annulus $\widetilde{\A}$ and Disk $\widetilde{D}$}
\end{figure}

\begin{figure}[h]
\centering
\includegraphics[width=3in]{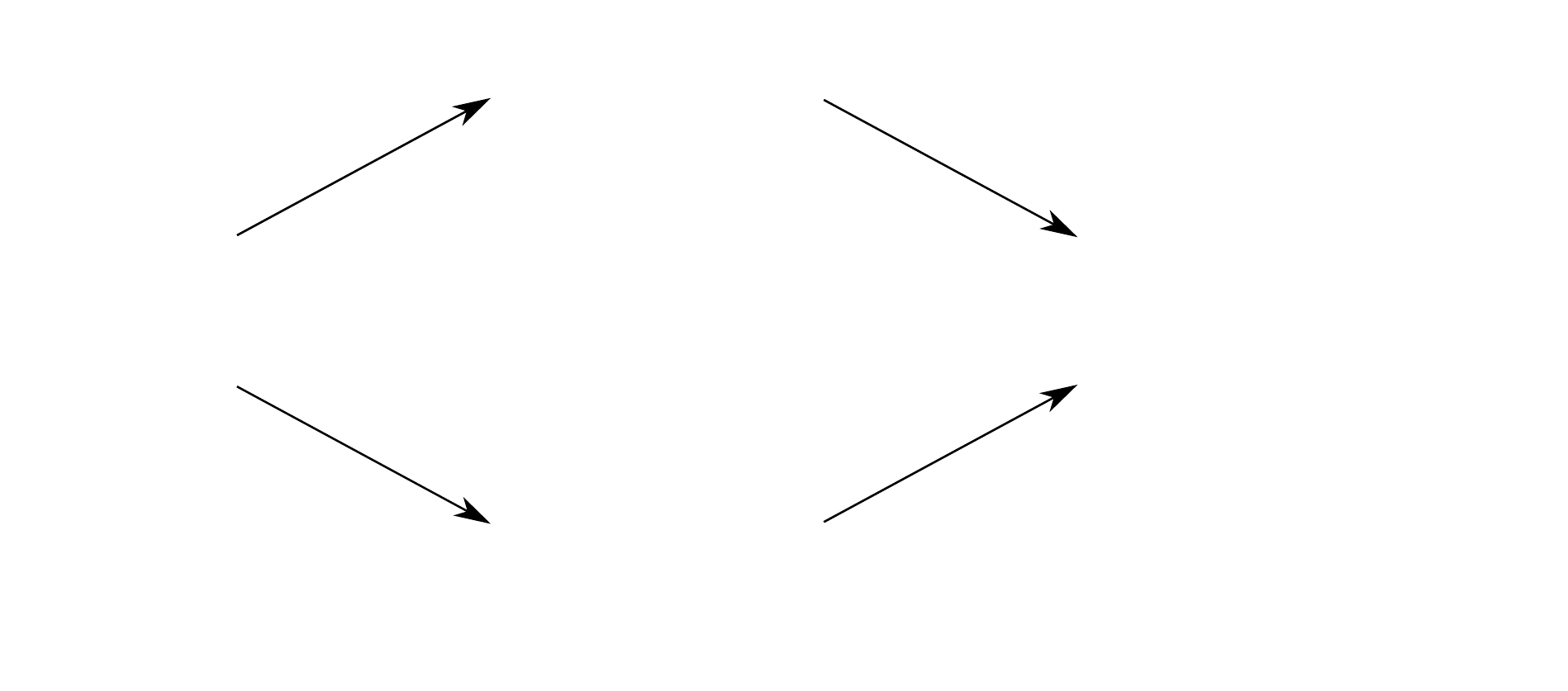}
  \put(-2.9,0.68){$\B$}  
  \put(-1.96,1.2){$\B_{(\phi_{\widetilde{\A}},n)}$}  
  \put(-1.96,0.2){$\widetilde{D}_l(\B)$}  
   \put(-0.9,0.68){$\widetilde{D}_l(\B_{(\phi_{\widetilde{\A}},n)})=\widetilde{D}_l(\B)_{(\phi_{\widetilde{\A}},n)}$}  
     \put(-2.5,1.1){$(1)$} 
       \put(-1.2,1.1){$(2)$} 
         \put(-2.5,0.3){$(3)$} 
           \put(-1.2,0.3){$(4)$} 
\caption{Two modifications on $\B$}
\end{figure}

\begin{enumerate}
\item For the map (1), notice that $\phi_{\widetilde{\A}}$ is isotopic to $\A_0$. In that case, we have shown already that $\B_{(\phi_{\widetilde{\A}},n)}$ is diffeomorphic to the standard $\B$.
\item For the map (2), Dehn surgery at each level does not change the $\text{Arc}$. Thus $\widetilde{D} \subseteq \B_{(\phi_{\widetilde{\A}},n)}=\B$ and $D_0\subseteq \B$ are smoothly isotopic through proper embeddings, which implies that $\widetilde{D}_l(\B_{(\phi_{\widetilde{\A}},n)})$ is diffeomorphic to the standard $\B$. 
\item For the map (3), $\widetilde{D} \subseteq \B$ and $D_0\subseteq \B$ are smoothly isotopic through proper embeddings, which implies that $\widetilde{D}_l(\B)$ is diffeomorphic to the standard $\B$. 
\item For the map (4), before performing any modifications we can isotope $\widetilde{A}$ to $\partial \B = \CCC$ away from $\widetilde{D}$. We can visualize this (see Figure $9$) by pushing $\widetilde{A}$ in to $\partial \text{B}^3 \times [0 , 1]$ to the right. Then after the modification, $\widetilde{A} \subseteq \widetilde{D}_l(\B)$ is smoothly isotopic through proper embeddings to an annulus in $\CCC$ which is described in Figure $11$. Hence $\widetilde{A} \subseteq \widetilde{D}_l(\B)=\B$ is smoothly isotopic through proper embeddings to $\A_l \subseteq \B$ which was described in the beginning of the section. Then we can conclude that $\widetilde{D}_l(\B)_{(\phi_{\widetilde{\A}},n)}$ is diffeomorphic to $\B_{(\phi_{\widetilde{\A}_l},n)}$. 
\end{enumerate}
By $(1)$ and $(2)$, we see that $\widetilde{D}_l(\B_{(\phi_{\widetilde{\A}},n)})$ is diffeomorphic to the standard $\B$, and from $(3)$ and $(4)$, $\widetilde{D}_l(\B)_{(\phi_{\widetilde{\A}},n)}$ is diffeomorphic to $\B_{(\phi_{\widetilde{\A}_l},n)}$. Hence $\B_{(\phi_{\widetilde{\A}_l},n)}$ is diffeomorphic to the standard $\B$ for all integers $n$ which concludes the proof.\end{proof}

\begin{figure}[h]
\centering
\includegraphics[width=5in]{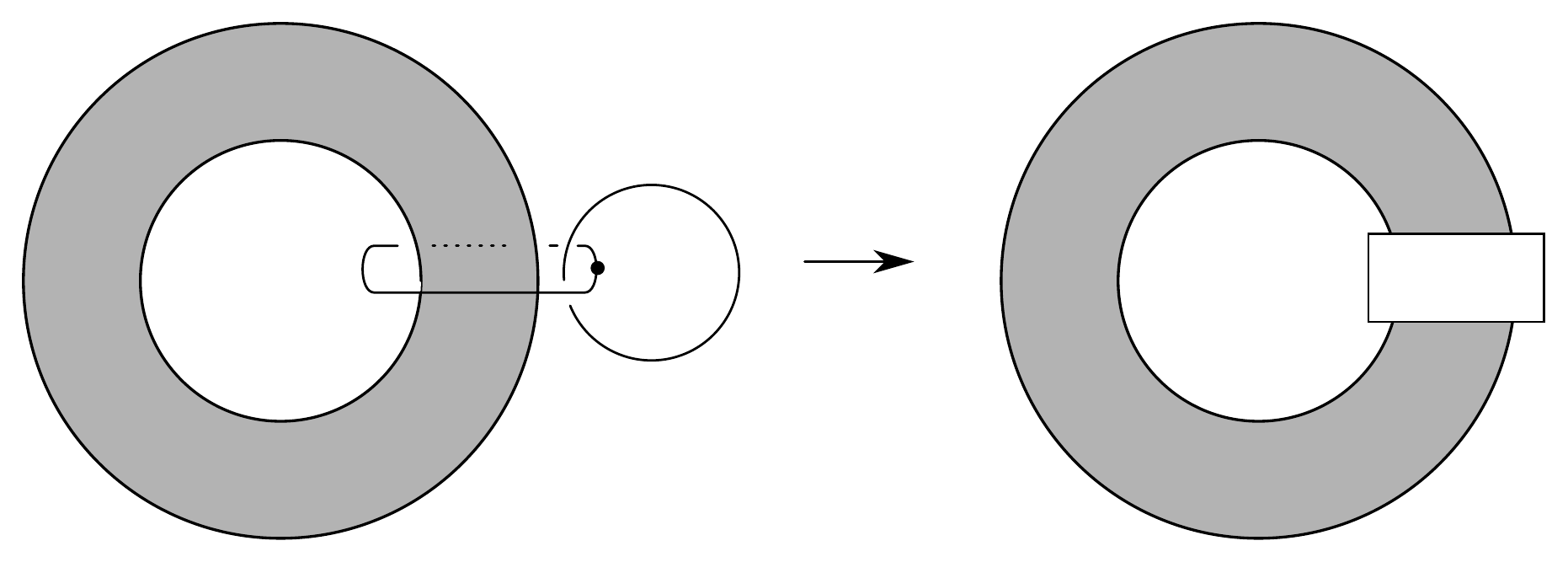}
  \put(-2.9,1.3){$l$}  
  \put(-0.38,0.86){$l$}  
    \put(-2.4,1.1){$(3)$}  
\caption{$\widetilde{\A}$ after $(3)$}
\end{figure}

We end this section by using a result of Scharlemann and a result of Livingston to find a sufficient criterion for $0$-nice $(\{ \eta_1, \eta_2 \}, \phi_A)$ to be $0$-standard.

\begin{theorem} \cite[Main Theorem]{Sc85} Suppose that $\eta_1$ and $\eta_2$ are knots in $\CCC$ which form a split link and that a certain band sum of $\eta_1$ and $\eta_2$ yields the unknot. Then $\eta_1$ and $\eta_2$ are each unknotted and the band sum is connected sum.
\end{theorem}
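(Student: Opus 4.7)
The plan would follow the sutured-manifold-theoretic strategy developed by Gabai and applied by Scharlemann in \cite{Sc85}. The first step is to reformulate the hypothesis surgery-theoretically. Let $K = \eta_1 \#_b \eta_2$ denote the band sum along the band $b$, and suppose $K$ is unknotted. Let $\gamma$ be a small meridional circle around the core arc of $b$. Then 0-surgery on $K$ yields $\C \times \text{S}^2$; equivalently, viewing the band sum as obtained from $\eta_1 \sqcup \eta_2$ by attaching a 1-handle whose belt sphere is $\gamma$, one obtains a Kirby calculus description of $\C \times \text{S}^2$ in terms of $\eta_1 \cup \eta_2$ together with $\gamma$ and appropriate framings. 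The splitness hypothesis gives an essential 2-sphere $\Sigma \subseteq \CCC$ separating $\eta_1$ from $\eta_2$.

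The core of the proof is to apply sutured manifold theory to $M = \CCC \setminus \nu(\eta_1 \sqcup \eta_2)$, using the band $b$ (or rather the annulus it traces out when pushed onto $\eta_1 \sqcup \eta_2$) together with $\Sigma$ as decomposing surfaces. I would set up a sutured structure on $M$ with sutures the meridians of $\partial \nu(\eta_i)$, construct a taut sutured hierarchy (Gabai), and use the Thurston norm to constrain minimal genus Seifert surfaces for $\eta_1$ and $\eta_2$. The unknottedness of the band sum forces the existence of a disk in the surgered manifold meeting the co-core of the 2-handle once; pulling this disk back through the hierarchy, together with the fact that $\Sigma$ is a reducing sphere, forces each $\eta_i$ to bound a disk disjoint from the other component, i.e., each is unknotted.

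Finally, to upgrade the conclusion from "each $\eta_i$ unknotted" to "the band sum is a connected sum," I would analyze the intersection pattern of the band $b$ with the splitting sphere $\Sigma$. Using the fact that $\eta_1 \sqcup \eta_2$ is split, I would attempt to isotope $b$ so that it meets $\Sigma$ in a single arc; standard innermost-disk / outermost-arc arguments in $\Sigma \cap b$, combined with the disks bounded by $\eta_1$ and $\eta_2$ produced in the previous step, should allow elimination of all but one intersection arc. Once $b$ meets $\Sigma$ in a single arc, the band sum is by definition the connected sum.

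The main obstacle will be the sutured-hierarchy argument in the middle step: constructing a hierarchy that simultaneously respects both the band annulus and the splitting sphere, and extracting from it the precise norm inequality that forces each $\eta_i$ to be unknotted, is the technical heart of \cite{Sc85} and is where Gabai's machinery is indispensable. The first and third steps are, by comparison, mostly bookkeeping in Kirby calculus and innermost-disk surgery.
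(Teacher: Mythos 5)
You should first note that the paper contains no proof of this statement at all: it is imported verbatim from Scharlemann \cite[Main Theorem]{Sc85} and used as a black box in the corollary that follows it in Section 3, so there is no internal argument to compare yours against; the only question is whether your sketch would amount to an independent proof, and it does not. All of the content of the theorem is concentrated in your middle step, which you state as ``construct a taut sutured hierarchy and extract the norm inequality that forces each $\eta_i$ to be unknotted,'' and which you yourself flag as the technical heart. No hierarchy is constructed, no decomposing surfaces are specified beyond naming the band annulus and the splitting sphere, and no norm inequality is derived; as written this is a plan to invoke Gabai--Scharlemann machinery rather than an argument. (It is also not the route of the cited source: the proof in \cite{Sc85} is a delicate combinatorial intersection argument in the style of Scharlemann's earlier work on tunnel number one knots; sutured-manifold proofs of the unknottedness statement, via superadditivity of genus under band connected sum, came later. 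That would be a legitimate alternative route, but only if actually carried out.)

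The final step is also misjudged. Knowing that $\eta_1$ and $\eta_2$ are unknotted does not reduce ``the band sum is a connected sum'' to bookkeeping. The band $b$ can a priori be knotted and linked with the splitting sphere $\Sigma$ in ways that innermost-disk and outermost-arc arguments on $\Sigma \cap b$ cannot remove: there is no ambient disk or sphere available in advance to guide the isotopy, and compressing $\Sigma$ along an innermost disk either changes $\Sigma$ or presupposes that $b$ is already parallel into it, which is exactly what has to be proved. The triviality of the band is part of the hard conclusion of Scharlemann's theorem (it is precisely what fails for general bands joining unknots) and in \cite{Sc85} it emerges from the same global argument, not as an afterthought; likewise the Kirby-calculus reformulation in your first step is never used and does not localize the difficulty. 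If you intend to use the statement, cite it as the paper does; if you intend to reprove it, each of the announced steps, above all the hierarchy argument and the band-triviality argument, must be supplied.
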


\begin{theorem} \cite[Theorem $4.2$]{Liv82} Let $F_1$ and $F_2$ be orientable surfaces embedded in $\CCC$, bounding the unlink. After pushing interior of $F_1$ and $F_2$ to $\B$, they are isotopic through proper embeddings if and only if $F_1$ and $F_2$ are homeomorphic.
\end{theorem}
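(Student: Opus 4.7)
The ``only if'' direction is immediate, since any ambient isotopy carrying $F_1$ to $F_2$ restricts to a homeomorphism between them. For the converse, the plan is to show that every orientable surface $F \subset \CCC$ of topological type $(g,b)$ with unlink boundary is isotopic, after push-in, to a fixed standard model surface $F_0(g,b) \subset \B$ of the same topological type; the theorem then follows by transitivity. I will proceed by normalizing the boundary, then the $0$-handles, then the $1$-handles.

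First, since all $b$-component unlinks in $\CCC$ are ambient isotopic, I may assume $\partial F = L_0 = \partial F_0$, the standard $b$-component unlink. Next, I would fix a disk-band decomposition $F = (D_1\sqcup\cdots\sqcup D_b) \cup (\beta_1\cup\cdots\cup\beta_{2g+b-1})$, where the $D_j$ are the $0$-handle disks bounding the components of $L_0$ and the $\beta_k$ are the $1$-handle bands. By Alexander's theorem, each disk in $\CCC$ bounding an unknot is isotopic in $\CCC$ rel boundary to the standard flat disk spanning that unknot, so after push-in the disks $D_j$ can be made to agree with the corresponding $0$-handles of $F_0$.

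The heart of the proof is to isotope the bands $\beta_k$ into standard position in $\B$. Collapsing the width of each band, $\beta_k$ retracts onto a framed arc properly embedded in $\B$ with endpoints on the already-standardized disks. The claim I would prove is that any two such framed arcs are isotopic rel endpoints: two arcs with fixed endpoints in the simply connected $4$-manifold $\B$ are connected by an embedded $2$-disk by general position and simple connectivity, which provides the ambient isotopy of the underlying arcs; and the framing of a $1$-manifold in a $4$-manifold is determined up to isotopy by the normal Euler number, which here is pinned down by the topological type of $F$. Iterating band-by-band, both pushed-in $F_1$ and pushed-in $F_2$ are isotoped onto $F_0$.

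The principal obstacle will be the inductive band-standardization step: each isotopy must be performed in the complement of the disks and of the already-standardized bands. The argument requires checking that the complement in $\B$ of the portion of $F$ already put into standard position remains sufficiently connected (in particular, simply connected in its relevant sub-region) at each stage, so that a Whitney- or light-bulb-style $4$-dimensional move can guide the next band into place without creating new intersections with the standardized data.
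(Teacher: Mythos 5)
You should first note that the paper does not prove this statement at all: it is quoted verbatim as \cite[Theorem 4.2]{Liv82}, so there is no in-paper argument to compare with, and your sketch has to stand on its own. It does not, because the central step is asserted rather than proved. Your claim that the band framings are ``pinned down by the topological type of $F$'' is false: homeomorphic Seifert surfaces with the same unlink boundary can have genuinely different Seifert forms, hence bands with different framings. Concretely, plumbing an untwisted unknotted annulus with an $n$-twisted unknotted annulus gives a genus-one surface in $\CCC$ with boundary the unknot and Seifert matrix $\begin{pmatrix} 0 & 1 \\ 0 & n \end{pmatrix}$, while the standard stabilized disk has Seifert matrix $\begin{pmatrix} 0 & 1 \\ 0 & 0 \end{pmatrix}$; both bound the $1$-component unlink and are homeomorphic. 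Moreover you cannot simply ``untwist'' a band while holding the rest of the data fixed, since that changes the boundary link in $\CCC$, which is pinned to $\partial\B$; twists can only be traded or cancelled through a global isotopy in $\B$, and showing that the push-in makes this possible is precisely the content of Livingston's theorem. Your proof assumes the conclusion at exactly this point.

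Two further steps are also unsupported. The assertion that two arcs with common endpoints in $\B$ ``are connected by an embedded $2$-disk by general position and simple connectivity'' fails in dimension four: a generic disk in a $4$-manifold has isolated double points, so simple connectivity plus general position does not produce embedded Whitney-style disks (this is the basic difficulty of dimension $4$). Unknotting of arcs in $\B$ rel endpoints is true, but by a codimension-three argument (a generic track of a homotopy in $\B\times[0,1]$ is embedded, and concordance implies isotopy in codimension $\geq 3$), not the one you give. Finally, your last paragraph concedes that the inductive disjointness of each band isotopy from the already-standardized disks and bands is unchecked; together with the framing bookkeeping above, that is where essentially all of the work in \cite{Liv82} lies, so the proposal as written does not constitute a proof.
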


Let $h : \B = \{ (x,y,z,w) \in \mathbb{R}^4 : x^2+y^2+z^2+w^2 \leq1\} \rightarrow \mathbb{R}$ where $h(x,y,z,w)=x^2+y^2+z^2+w^2$ and let $\phi_\A$ be a proper smooth embedding of an annulus in $\B$. By abuse of notation we will refer to critical points of $h \circ \phi_\A$ as critical points of $\phi_\A$. Then we have the following corollary.

\begin{corollary}Let $\K$ be a smoothly slice knot bounding a smooth disk $\D$, and let $(\{ \eta_1, \eta_2 \}, \phi_A)$ be $0$-nice. If $\phi_A$ has one critical point of index one and one critical point of index two then $(\{ \eta_1, \eta_2 \}, \phi_A)$ is $0$-standard. Hence, $\frac{1}{n}$ Dehn surgery on $\eta_1$ and $\frac{-1}{n}$ Dehn surgery on $\eta_2$ will produce a smoothly slice knot $\K_{(\phi_\A,n)} \subseteq \CCC$ for any integer $n$, where $\K_{(\phi_\A,n)}$ is the image of $\K$ in the new $3$-manifold $\CCC$.

\end{corollary}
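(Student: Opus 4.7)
My plan is to extract the Morse-theoretic movie of $\phi_\A$, use Scharlemann's theorem to recognize the saddle band as trivial, and then invoke Livingston's theorem to identify $\phi_\A$ with $\A_0$ pushed into $\B$. Read $\phi_\A$ as a top-down movie starting from $\partial\A = \eta_1\cup\eta_2\subset\CCC$ at $h=1$. By hypothesis the movie contains exactly one saddle (the index-$1$ critical point) and one death of a circle (the index-$2$ critical point, i.e.\ the absolute minimum of $h|_\A$). Since the movie must terminate with no circles at the deepest level, the saddle must merge $\eta_1$ and $\eta_2$ into a single circle $\gamma$, and $\gamma$ is then capped off at the death. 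Consequently $\gamma$ is realized as a band sum of $\eta_1$ and $\eta_2$ along the saddle band.

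Next I would argue that $\gamma$ is the unknot. Below the saddle level, $\phi_\A$ restricts to a properly embedded disk $E$ in the sub-$4$-ball $\{h\le c\}$, and by the critical-point count the restricted height function on $E$ has a single interior critical point, namely the cap. A properly embedded disk in a $4$-ball whose height function has a unique critical point is smoothly ambient isotopic to the flat equatorial disk, so $\partial E = \gamma$ is the unknot in the relevant level $3$-sphere. Combined with the $0$-nice hypothesis that $\{\eta_1,\eta_2\}$ is the split unlink $L_0$, Scharlemann's theorem now applies and forces the saddle band to be a trivial (connect-sum) band between $\eta_1$ and $\eta_2$.

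Using this trivial band I would construct a model annulus $F\subset\CCC$: attach the band between $\eta_1$ and $\eta_2$ in $\CCC$ to produce a pair of pants with third boundary the unknot $\gamma$, and then cap $\gamma$ off with an embedded disk in $\CCC$. Since any two trivial bands between fixed components of a split link are isotopic, and any two disks in $\CCC$ bounding the same unknot are isotopic, a critical-point-cancellation argument exhibits $\phi_\A$ as smoothly isotopic through proper embeddings in $\B$ to the pushed-in copy of $F$. Both $F$ and $\A_0$ are orientable annuli embedded in $\CCC$ with boundary the unlink $\eta_1\cup\eta_2$, so Livingston's theorem produces a proper smooth isotopy between their pushed-in copies in $\B$. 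Chaining the two isotopies shows $\phi_\A$ is proper-isotopic to $\A_0$, so $(\{\eta_1,\eta_2\},\phi_\A)$ is $0$-standard; Theorem~3.1 then yields the Dehn surgery statement for $\K_{(\phi_\A,n)}$.

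The main obstacle is bridging the Morse data of $\phi_\A$ to the explicit pushed-in model $F$, i.e.\ verifying that the trivial band from Scharlemann and the one-critical-point capping disk for $\gamma$ actually force $\phi_\A$ itself (not merely a surface with the same combinatorial movie) to lie in a product neighborhood of $\CCC$ after isotopy. Once this identification is in place, the uniqueness of trivial bands in $\CCC$ and of disks bounding the unknot, together with Livingston's theorem, convert these local facts into a single global isotopy to $\A_0$.
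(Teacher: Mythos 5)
Your proposal is correct and follows essentially the same route as the paper's proof: read the Morse data of $\phi_\A$ as one band plus one cap, conclude the banded knot is unknotted, apply Scharlemann's theorem to see the band is a connected-sum band, isotope the annulus into $\CCC$, and invoke Livingston's theorem to identify it with the pushed-in $\A_0$ before citing Theorem~3.1. The ``main obstacle'' you flag (promoting the movie description to a proper isotopy into a collar of $\CCC$) is handled at the same informal level in the paper, which isotopes $D_0$ onto a pushoff of $D_1\cup B\cup D_2$ to realize the annulus in $\CCC$.
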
 

\begin{proof} By Theorem $3.1$ it is enough to show that $(\{ \eta_1, \eta_2 \}, \phi_A)$ is $0$-standard. In other words it would be enough to show $\phi_A$ and $\phi_{A_0}$ are smoothly isotopic through proper embeddings, when $\phi_A$ has one critical point of index one and one critical point of index two.

Since $\eta_1$ and $\eta_2$ form a two component unlink, they bound smoothly embedded disks $D_1$ and $D_2$ respectively. A critical point of index one corresponds to a band sum between $\eta_1$ and $\eta_2$ which can be isotoped into $\CCC$; we will call this band $B$. Let $\eta_0$ be the resulting knot after doing the band sum. A critical point of index two corresponds to a disk bounded by $\eta_0$ which also could be isotoped into $\CCC$ hence $\eta_0$ is the unknot. We will call this disk $D_0$.

By Theorem $3.3$ \cite[Main Theorem]{Sc85}, $B$ is connected sum, and hence $B$ does not intersect $D_1$ and $D_2$. Thus we have two disks $D_0$ and $D_1\cup B\cup D_2$ in $\CCC$ bounded by $\eta_0$. Since any two disks bounded by same curve in $\CCC$ can be isotoped into each other, we can isotope $D_0$ into $D_1\cup B\cup D_2$ and then push it slightly off $D_1\cup B\cup D_2$, so that they are disjoint. This gives you an annulus that is cobounded by $\eta_1$ and $\eta_2$, namely $D_0 \cup B$.

Thus we have isotoped $\phi_A$ into $\CCC$. By Theorem $3.4$ \cite[Theorem $4.2$]{Liv82} there is only one isotopy class of embedding of an annulus into $\CCC$ which sends the boundary components to the unlink. We see that $\phi_A$ and $\phi_{A_0}$ are smoothly isotopic through proper embeddings. Then we can apply Theorem $3.1$ to conclude our proof.
\end{proof}


\section{Application 1 : An example of a slice knot with non-slice derivatives}\label{Application 1}
\begin{figure}[h]
\centering
\includegraphics[width=5.1in]{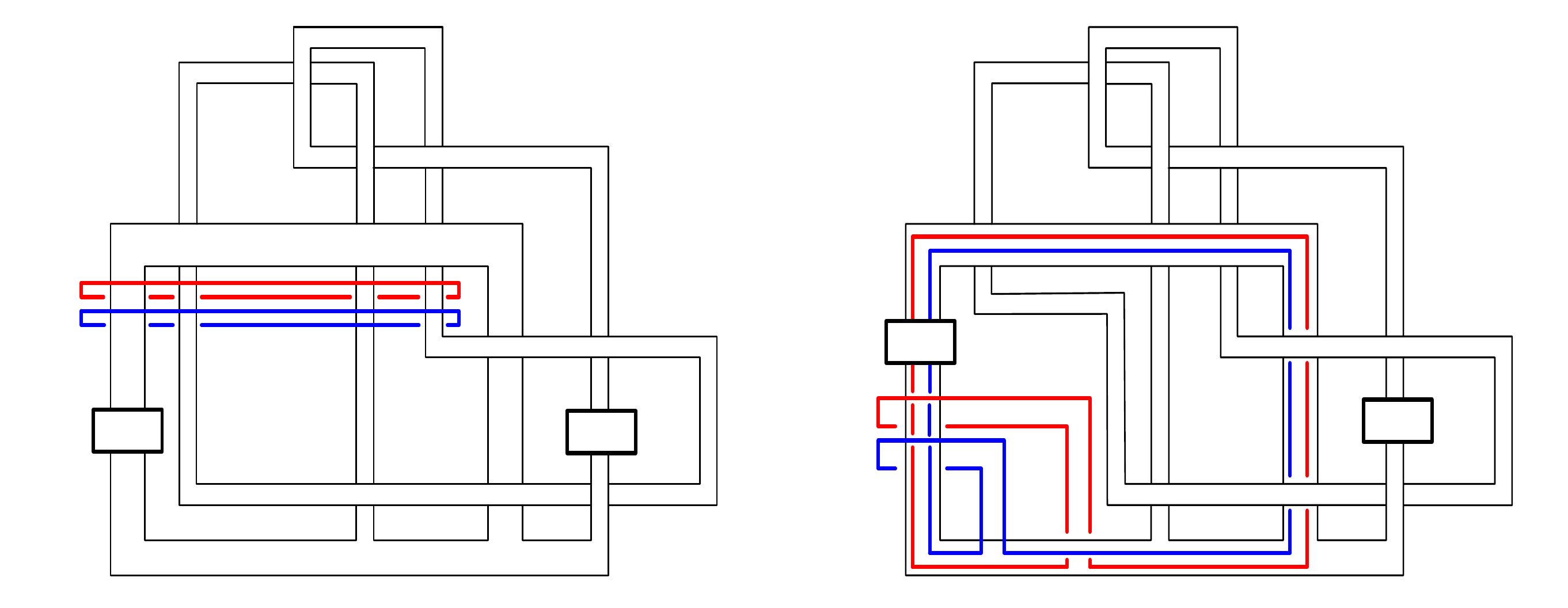}
  \put(-5,1.05){$\eta_1$}  
    \put(-5,0.9){$\eta_2$} 
     \put(-4.72,0.53){$2$}  
      \put(-3.25,0.53){$-2$}  
      \put(-2.4,0.65){$\eta_1$}  
    \put(-2.4,0.5){$\eta_2$} 
           \put(-2.15,0.82){$2$}  
  \put(-0.67,0.57){$-2$}  
    \put(-2.6,1.1){$\cong$}
\caption{The knot $\R$, the two component unlink $\eta_1$, and $\eta_2$ ($\cong$ means isotopic)}
\end{figure}

In this section we will fix a smoothly slice knot $\R$, and a two component unlink $\eta_1$, and $\eta_2$ as in Figure $12$. Note that the core of a second band is a derivative of $\R$ and in fact it is the Stevedore's knot, which implies that $\R$ is smoothly slice. The last picture in Figure $13$ is the $(2,0)$-cable of Stevedore's knot which is concordant to $(2,0)$-cable of unknot which means it is a slice link. Hence we can cap off a slice link with three disks in the last picture in Figure $13$. Then black curves in Figure $13$ describe slice disk $\D$ for $\R$ and red curves in Figure $13$ describe $\phi_\A$ such that $\text{Im}(\phi_\A)= \A$ bounds $\eta_1$ and $-\eta_2$. Now, it is easy to see that $(\{\eta_1, \eta_2\},\phi_\A)$ is $0$-nice, since there was no intersection between black curves and red curves in Figure $13$. Further, $\phi_\A$ has one critical point of index one and one critical point of index two since there was only one band sum between $\eta_1$ and $\eta_2$, so we can use Corollary $3.2$ to conclude $\R_1$, which is obtained by $1$ surgery on $\eta_1$ and $-1$ surgery on $\eta_2$, is a smoothly slice knot. Now we show that the knot $\R_1$ is an example of a slice knot with non-slice derivatives. Note that this is an analogue of Proposition $5.2$ in \cite{CD14}.

\begin{figure}[h]
\centering
\includegraphics[width=5.15in]{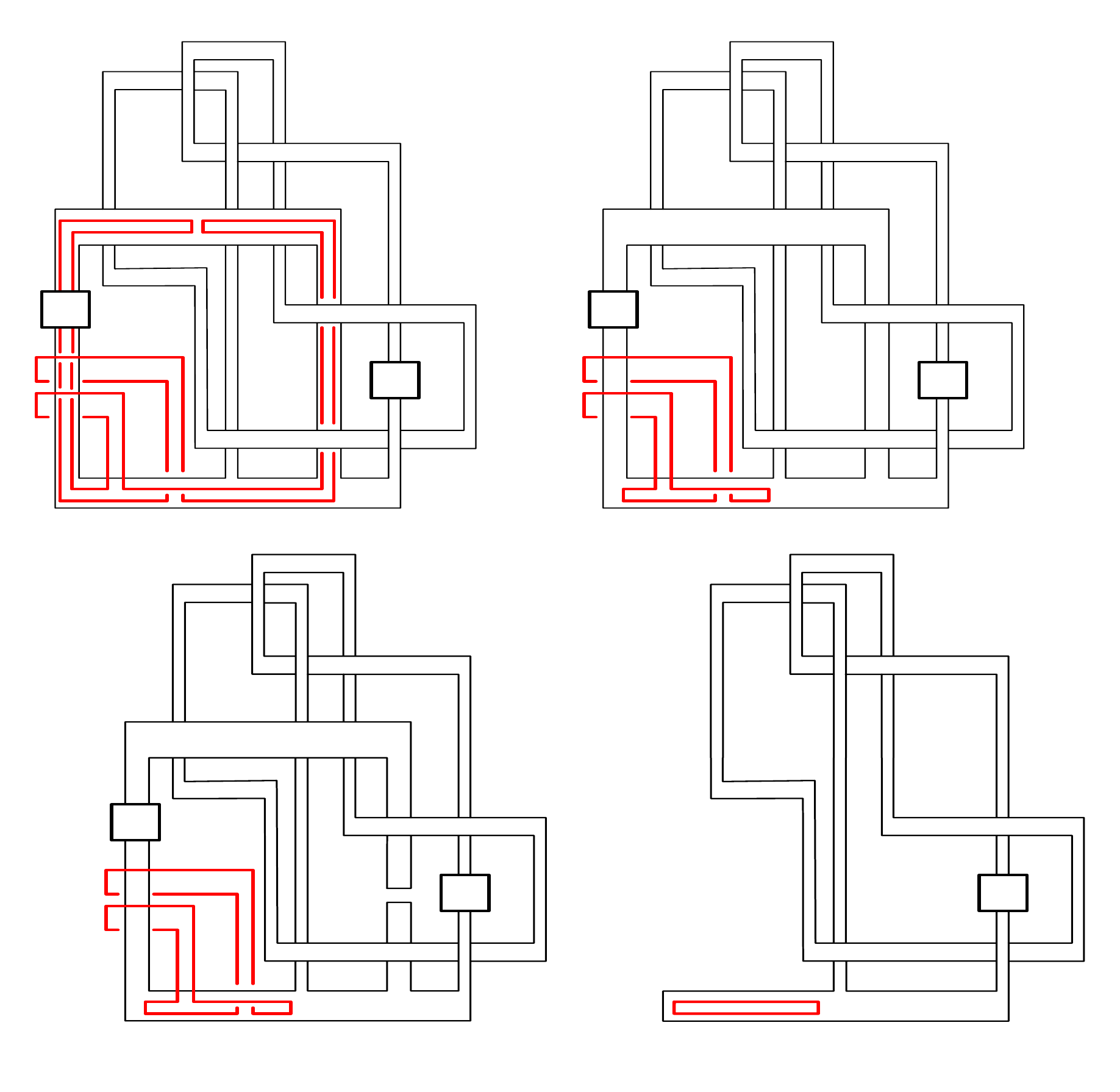}
  \put(-4.9,3.55){$2$}  
  \put(-2.36,3.55){$2$}  
  \put(-3.44,3.22){$-2$}  
  \put(-0.91,3.22){$-2$}  
  \put(-4.56,1.18){$2$} 
  \put(-3.11,0.85){$-2$}  
  \put(-0.63,0.85){$-2$}  
    \put(-5.26,1.07){$\Rightarrow$}
        \put(-2.76,3.37){$\cong$}
                \put(-2.36,1.07){$\cong$}
\caption{The first figure is obtained by adding a band between $\eta_1$ and $-\eta_2$. The third figure is obtained by performing a ribbon move on $\R$ ($\cong$ means isotopic)}
\end{figure}

\begin{theorem}\label{Main}Let $\R_1$ be a knot described as above. Then $\R_1$ is a smoothly slice knot with non-slice derivatives.\end{theorem}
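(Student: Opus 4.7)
The plan follows the strategy of Cochran--Davis in \cite{CD14}: exhibit an explicit derivative knot of $\R_1$ and then obstruct its sliceness with a concordance invariant.

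First, verify that the genus-one Seifert surface $F$ for $\R$ (the disk-with-two-bands visible in Figure $12$) can be isotoped, rel boundary, to lie in the complement of the annulus $\A$ and its boundary curves $\eta_1 \cup \eta_2$. Since the $+1$ Dehn surgery on $\eta_1$ and $-1$ Dehn surgery on $\eta_2$ are supported in a tubular neighborhood of $\A$, the surface $F$ then descends to a genus-one Seifert surface $F'$ for $\R_1$ with the same Seifert form. Consequently the algebraic derivatives of $\R_1$ on $F'$ correspond exactly to those of $\R$ on $F$; on $\R$, one of these is the Stevedore's knot, realized as the core of the second band.

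Second, identify the corresponding derivative of $\R_1$ as an explicit knot in $S^3 = \partial \B_{(\phi_\A, 1)}$. The derivative is carried along by the Kirby calculus sequence that establishes $\partial \B_{(\phi_\A, 1)} \cong S^3$, namely the Rolfsen twist sequence used in the proof of Theorem $2.1$ (cf.\ Figure $6$). Performing those moves while keeping track of the derivative curve produces an explicit diagram for the new derivative $d$ of $\R_1$; in general $d$ is no longer the Stevedore's knot, because the surgery alters its knot type via its linking with $\eta_1$ and $\eta_2$.

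Finally, compute the Arf invariant of $d$ and show it is nontrivial; a nonzero Arf invariant already obstructs algebraic (hence smooth) sliceness, so $d$ is a non-slice derivative of $\R_1$. The second derivative on $F'$, coming from the other primitive null class of the Seifert form (if any), is handled in the same way. The expected main obstacle is the middle step: carefully tracking the derivative curve through the Rolfsen twists on $\eta_1$ and $\eta_2$ so as to read off its knot type cleanly; once this is in hand, the Arf computation is routine, and if it turns out to be insufficient one can fall back on Casson--Gordon invariants or Heegaard Floer $d$-invariants of the double branched cover. A sanity check is built in: the corresponding derivative of $\R$ itself is Stevedore's knot, which has trivial Arf, so the computation must detect that the modification genuinely changes the knot type of the derivative.
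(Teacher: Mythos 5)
There are two genuine gaps, and they interact. First, your opening step --- that the Seifert surface $\F$ for $\R$ descends to a Seifert surface for $\R_1$ ``with the same Seifert form'' because it can be made disjoint from $\eta_1\cup\eta_2$ --- is false. Disjointness from the surgery curves lets the surface descend, but the Seifert form is computed from linking numbers in the \emph{new} copy of $\CCC$, and $\pm 1$ surgery on $\eta_1,\eta_2$ changes the linking/self-linking of any curves on $\F$ that link $\eta_1$ or $\eta_2$ --- which the bands of $\F$ do (otherwise the modification would not change the knot at all). Indeed the paper computes that the Seifert matrix changes from $M=\begin{pmatrix}2&1\\0&0\end{pmatrix}$ for $\R$ to $\widetilde{M}=\begin{pmatrix}2&0\\-1&-1\end{pmatrix}$ for $\R_1$. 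Consequently the metabolizer (derivative) classes are \emph{not} the images of those of $\R$: for $\widetilde{M}$ they are $[\widetilde{x_1}]+[\widetilde{x_2}]$ and $[\widetilde{x_1}]-2[\widetilde{x_2}]$, whereas your setup would have you tracking the image of the Stevedore curve $[x_2]$ (and $[x_1]-2[x_2]$). So the curves you propose to analyze are not the derivatives of $\R_1$, and the whole ``same Seifert form'' framework has to be replaced by an honest recomputation of the form on the modified surface, which is exactly what the paper does (Figure 14).

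Second, your chosen obstruction fails on the actual derivatives. The paper's derivative curves $\gamma_1,\gamma_2$ have $\Delta_{\gamma_1}(-1)=\Delta_{\gamma_2}(-1)=17\equiv 1 \pmod 8$, so both have \emph{trivial} Arf invariant; the Arf computation you plan as the main step cannot detect non-sliceness here (unlike in the Cochran--Davis example, where Arf is nonzero --- this example is different in precisely that respect). The paper instead uses the Fox--Milnor determinant condition: $17$ is not a square, so $\gamma_1$ and $\gamma_2$ are not even algebraically slice. Your fallback invariants (Casson--Gordon, $d$-invariants) are only named, not computed, so as written the proposal does not close the argument. (The sliceness of $\R_1$ itself, which is also part of the statement, is fine: it follows from Corollary 3.2 as in the paper, and you implicitly take this as given.)
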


\begin{proof}
By Corollary $3.2$, $\R_1$ is a smoothly slice, so it is enough to show that $\R_1$ has non-slice derivatives.

Let $\F$ be the Seifert surface of $\R$ described in Figure $14$. Let $x_1$ and $x_2$ be the cores of the bands of $\F$. Then $\{[x_1],[x_2]\}$ is a basis for $H_1(\F)$ and the Seifert matrix with respect to $\{[x_1],[x_2]\}$ is
$M=\begin{pmatrix}
  2 & 1 \\
  0 & 0
 \end{pmatrix}$, where $M=(m_{i,j})=lx(x_i,{x_j}^+)$ and ${x_j}^+$ is push off of ${x_j}$ in positive direction.
 
 \begin{figure}[h]
\centering
\includegraphics[width=6in]{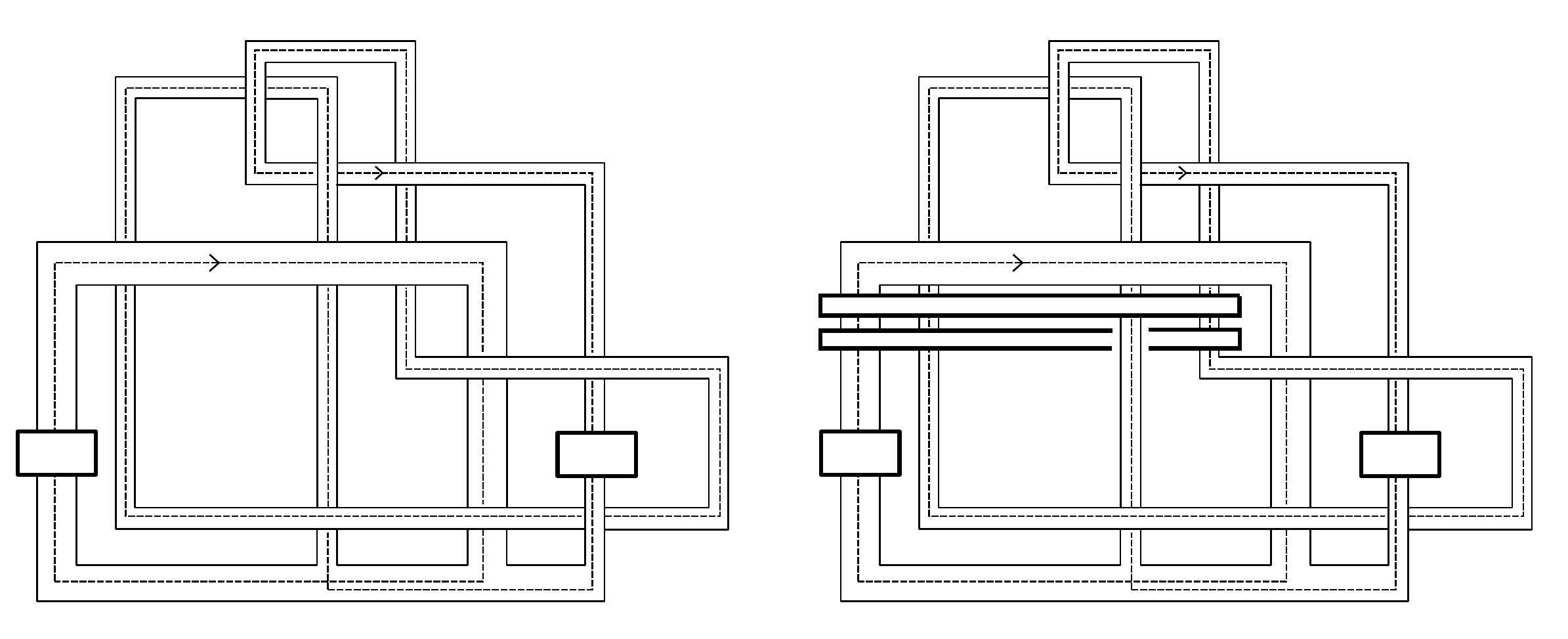}
  \put(-1.6,-0.1){$\widetilde{\F}$} 
    \put(-4.6,-0.1){$\F$} 
           \put(-2.76,0.66){$2$}  
  \put(-0.76,0.66){$-2$}  
             \put(-5.82,0.66){$2$}  
  \put(-3.82,0.66){$-2$}  
    \put(-2.2,1.6){$\widetilde{x_1}$}  
      \put(-1.2,2){$\widetilde{x_2}$}  
          \put(-5.4,1.6){$x_1$}  
      \put(-4.2,2){$x_2$}  
            \put(-2.18,1.245){\scalebox{0.5}{$-1$}} 
                        \put(-2.12,1.12){\scalebox{0.5}{$1$}} 
\caption{Seifert surface $\F$ for $\R$ and Seifert surface $\widetilde{\F}$ for $\R_1$}
\end{figure}

 Let $\widetilde{\F}$ be the Seifert surface for $\R_1$ obtained by doing a $1$-twist Annulus modification on $\phi_\A$ from $\F$ (see Figure $14$). Let $\widetilde{x_1}$ and $\widetilde{x_2}$ be the cores of bands of $\widetilde{\F}$. Then $\{[\widetilde{x_1}],[\widetilde{x_2}]\}$ is a basis for $H_1(\widetilde{\F})$. The Seifert matrix with respect to $\{[\widetilde{x_1}],[\widetilde{x_2}]\}$ is $\widetilde{M}=\begin{pmatrix}
  2 & 0 \\
  -1 & -1
 \end{pmatrix}$, where $\widetilde{M}=(\widetilde{m_{i,j}})=lx(\widetilde{{x_i}},\widetilde{{x_j}}^+)$ and $\widetilde{{x_j}}^+$ is push off of $\widetilde{{x_j}}$ in positive direction. This implies that the derivative curves for $\R_1$ are $\gamma_1$ and $\gamma_2$ where $[\gamma_1]=[\widetilde{x_1}]+[\widetilde{x_2}]$ and $[\gamma_2]=[\widetilde{x_1}]-2[\widetilde{x_2}]$, shown in Figure $15$. 
 
  \begin{figure}[h]
\centering
\includegraphics[width=6in]{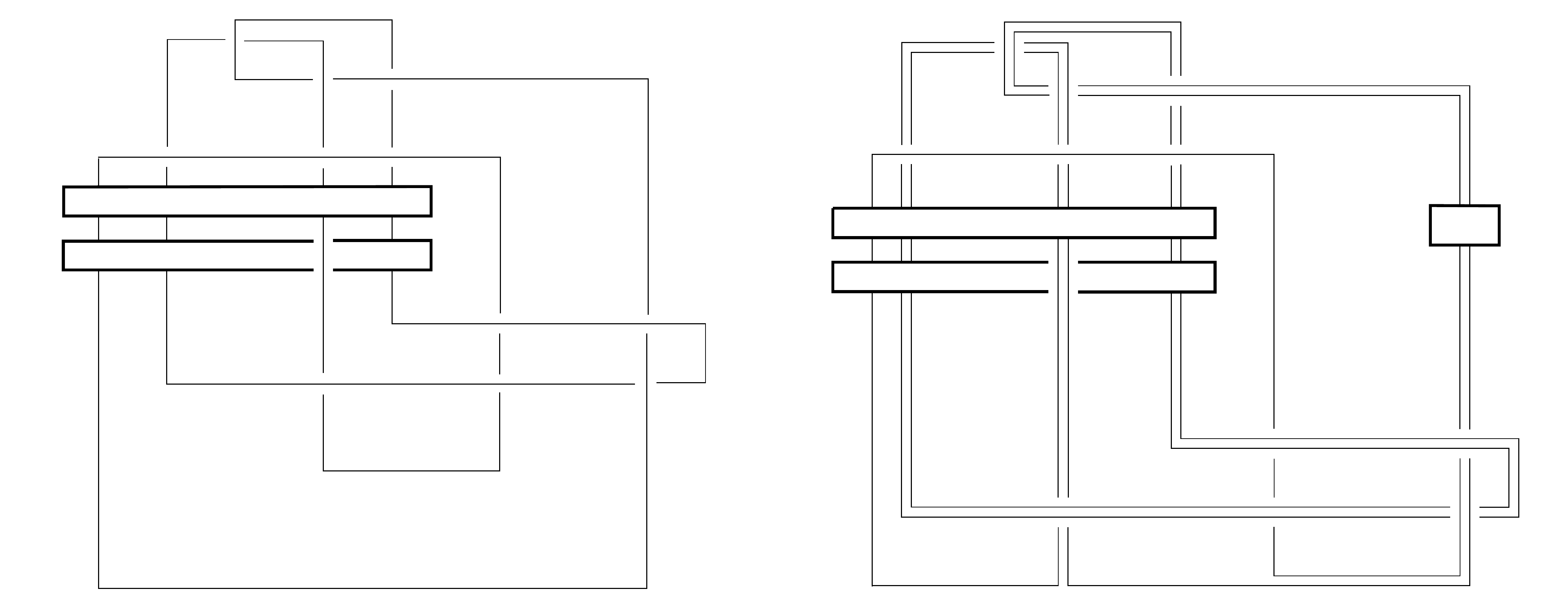}
  \put(-4.5,-0.1){$\gamma_1$} 
  \put(-1.5,-0.1){$\gamma_2$} 
  \put(-5.08,1.52){\tiny{$-1$}}  
  \put(-5,1.32){\tiny{$1$}}  
    \put(-2.28,1.44){\tiny{$-1$}}  
  \put(-2.2,1.24){\tiny{$1$}}  
  \put(-0.46,1.45){\tiny{$-2$}}  
\caption{Non-slice derivatives of $\R_1$}
\end{figure}
 
 We calculate the Alexander polynomial for each derivative curves, $\Delta_{\gamma_1}(t)=4-9t+4t^2$, and $\Delta_{\gamma_2}(t)=1+t+3t^2-11t^3+3t^4+t^5+t^6$. This implies that $\gamma_1$ and $\gamma_2$ are not smoothly slice knot since $\Delta_{\gamma_1}(-1)=\Delta_{\gamma_2}(-1)=17$ which is not a square of an odd prime. Note also that this implies that the curves $\gamma_1$ and $\gamma_2$ are not even algebraically slice.
\end{proof}

\section{Application 2 : Application related to an annulus twist}\label{Application 2}
We will first recall the definition of an oriented annulus presentation of a knot which was first introduced by Osoinach in \cite{Os06}. Detailed discussion could be found in \cite{AJOT13} or \cite{AT14}.

Let $\phi_B : \C \times [0, 1] \hookrightarrow \CCC$ be a smooth embedding of an annulus with $\text{Im}(\phi_B)= B$, and let $c$ be a ${\varepsilon}$ framed unknot in $\CCC$ where $\varepsilon \in \{+1 , -1\}$. These are described in the Figure $16$. Let $\phi_b : [0, 1] \times [0, 1] \hookrightarrow \CCC$ be a smooth embedding of a band with following properties:

\begin{itemize}
\item $\text{Im}(\phi_b)= b$
\item $b \cap \partial B = \text{Im}(\left. \phi_b \right |_{\partial [0,1] \times [0,1]})$ 
\item $b \cap \mathring{B}$ only has ribbon singularities.
\item $b \cap c = \emptyset$
\item $B \cup b$ is orientable.
\end{itemize}

\begin{figure}[h]
\centering
\includegraphics[width=6.1in]{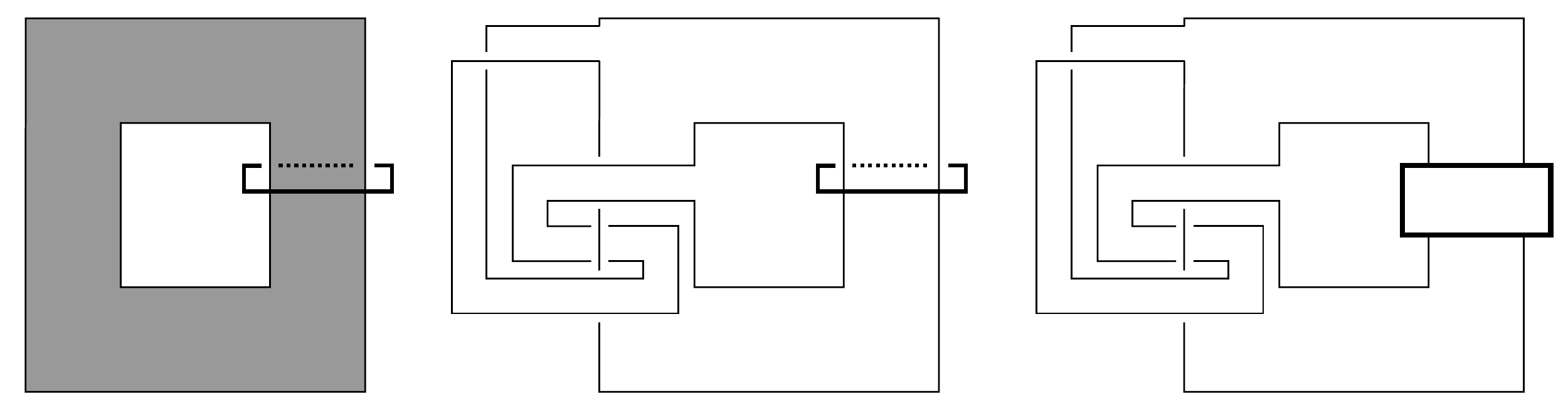}
  \put(-5.4,1.3){$B$}  
   \put(-3.2,1.3){$B$} 
  \put(-5.3,0.9){$c$} 
  \put(-4.6,1.1){$\frac{1}{\varepsilon}$} 
  \put(-4.1,0.2){$b$} 
  \put(-3.05,0.9){$c$} 
  \put(-2.3,0.9){$-1$} 
  \put(-0.4,0.8){$1$} 
\caption{The $8_{20}$ knot admits an oriented annulus presentation $(B,b,-1)$}
\end{figure}

Then we say a knot $\K$ admits an oriented annulus presentation $(B,b,\varepsilon)$ if $(\partial B - b \cap \partial B) \cup \text{Im}(\left. \phi_b \right |_{[0,1] \times \partial [0,1]})$ is isotopic to $\K$ after ${\varepsilon}$ Dehn surgery on $c$. The right side of Figure $16$ shows that the $8_{20}$ knot admits an oriented annulus presentation.

Suppose a knot $\K$ has an annulus presentation $(B,b,\varepsilon)$ and let $B'$ be an annulus which is obtained by slightly pushing $B$ into the interior of $B$, which is described in the Figure $17$. Let $\partial B'=\eta_1 \cup -\eta_2$, and $n$ be some integer, then $\frac{-n\varepsilon+1}{n}$ Dehn surgery on $\eta_1$ and $\frac{-n\varepsilon-1}{n}$ Dehn surgery on $\eta_2$ is called $n$-fold annulus twist on $\K$ defined by Osoinach in \cite{Os06} and the resulting knot will be denoted as $\K_{(B,b,\varepsilon),n}$. Note $lk(\eta_1, \eta_2)=-\varepsilon$.

\begin{figure}[h]
\centering
\includegraphics[width=6.1in]{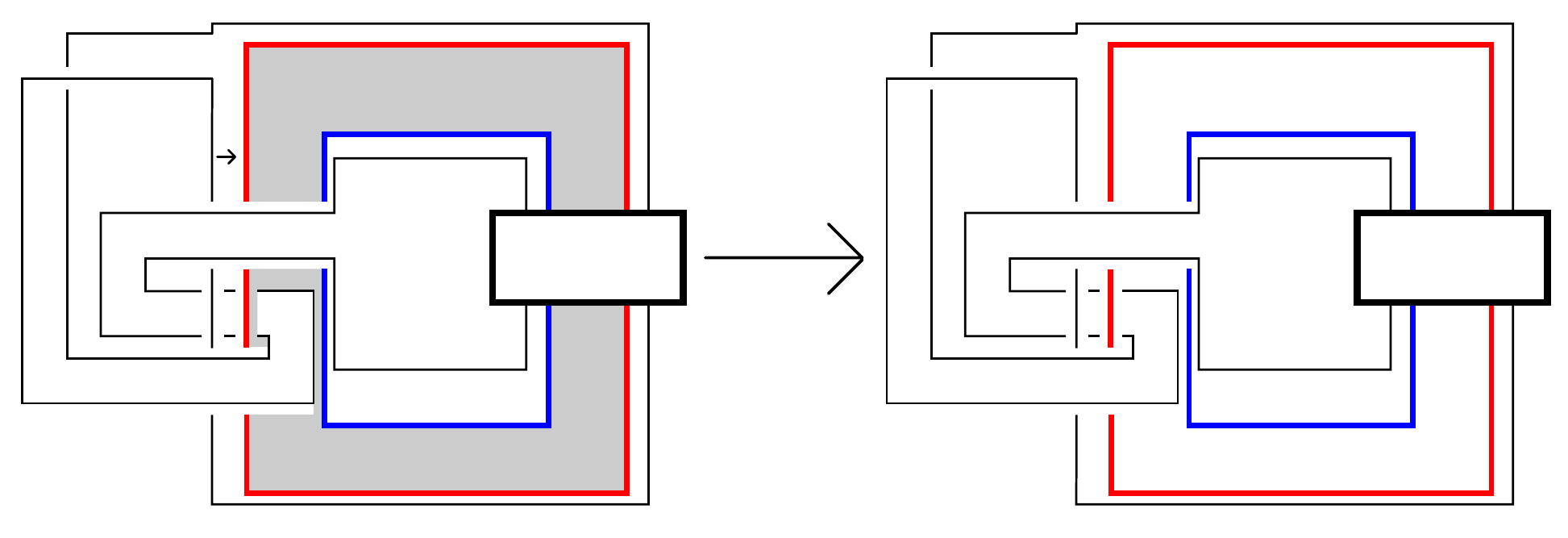}
  \put(-4.3,1.7){$B'$}  
   \put(-4.85,1.82){$\eta_1$} 
      \put(-4.85,1.62){$\eta_2$} 
  \put(-3.85,1.05){$1$} 
  \put(-0.5,1.05){$1$} 
     \put(-1.5,1.82){\tiny{$\frac{n+1}{n}$}} 
      \put(-1.5,1.62){\tiny{$\frac{n-1}{n}$}} 
\caption{$B'$ on the left, and $n$-fold annulus twist on $B'$ on the right for the $8_{20}$ knot}
\end{figure}

\begin{figure}[h]
\centering
\includegraphics[width=6.1in]{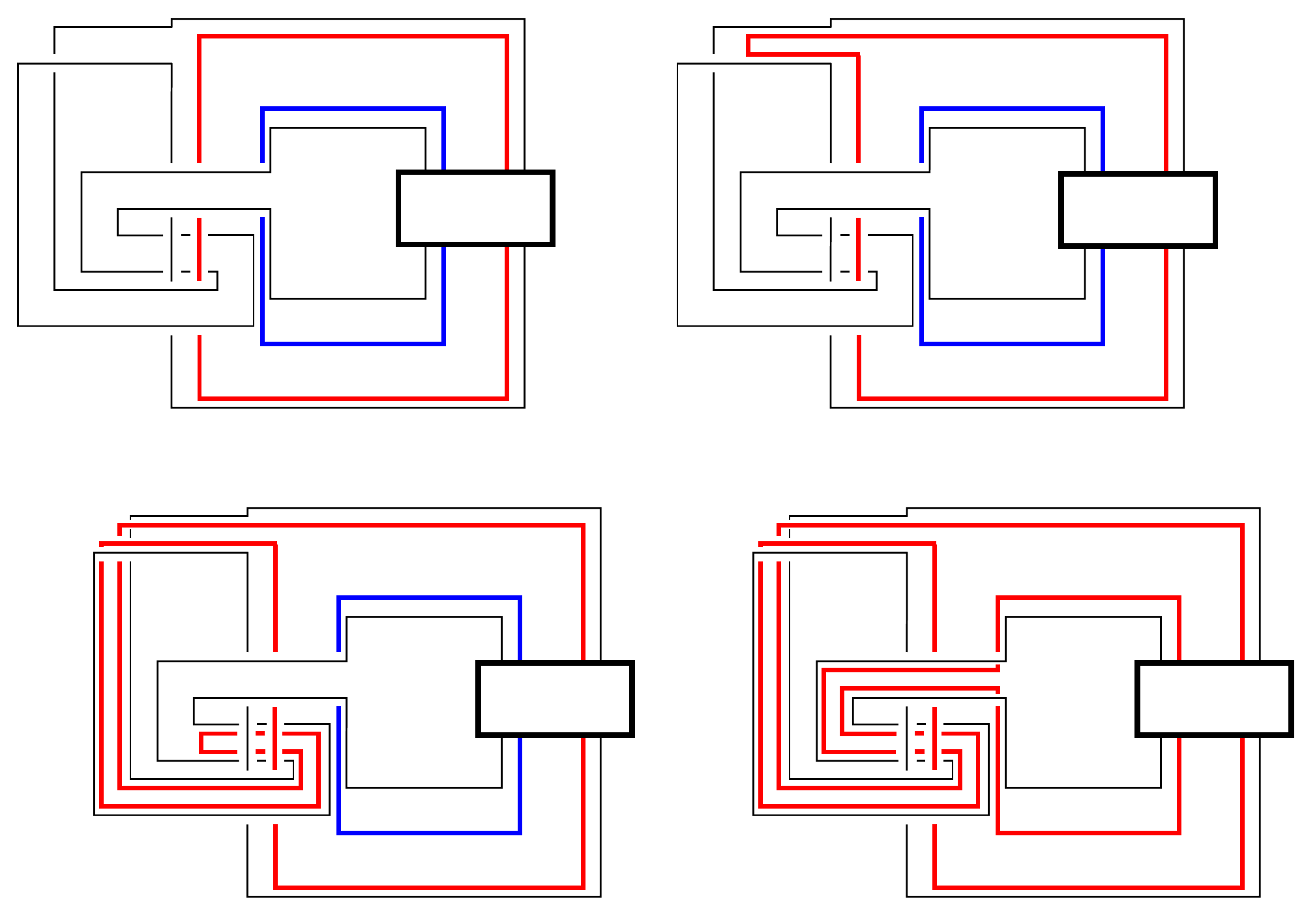}
  \put(-3.9,3.3){$1$}  
    \put(-4.8,4.05){$\eta_1$}  
      \put(-4.6,3.9){$\eta_2$}  
  \put(-0.8,3.3){$1$}  
  \put(-3.55,1){$1$}  
  \put(-0.45,1){$1$}
  \put(-3.3,3.3){$\cong$}
  \put(-6,1){$\cong$}
  \put(-2.9,1){$\Rightarrow$}
\caption{Adding a band between $\eta_1$ and $\eta_2$. The band is following the band $b$.}
\end{figure}

In \cite[Thm 2.3]{Os06} Osoinach showed that $0$ framed Dehn surgery on $\K$ is homeomorphic to $0$ framed Dehn surgery on $\K_{(B,b,\varepsilon),n}$ for any integer $n$. In particular by \cite[Proposition 1.2]{CFHeHo13} if $\K$ is smoothly slice then $\K_{(B,b,\varepsilon),n}$ is exotically slice, which was observed also in \cite{AJOT13}. In this paper we will reprove this statement with slightly stronger assumption using annulus modification without using \cite[Thm 2.3]{Os06}.

\begin{proposition} If $\K$ is a ribbon knot with the annulus presentation $(B,b,\varepsilon)$. Then $\K_{(B,b,\varepsilon),n}$ is exotically slice.
\end{proposition}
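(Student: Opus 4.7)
The plan is to realize the $n$-fold annulus twist on $\K$ as an $n$-twist annulus modification of $\B$ along a particular properly embedded annulus and then invoke Theorem 2.1. Let $\phi_\A : \C \times [0,1] \hookrightarrow \B$ be the proper smooth embedding obtained by pushing $B'$ slightly from $\CCC$ into a thin collar $\CCC \times [0, \epsilon] \subset \B$, and set $\A := \text{Im}(\phi_\A)$. With $l := -\varepsilon = lk(\eta_1, \eta_2)$, the Dehn surgery coefficients $\frac{-n\varepsilon + 1}{n}$ on $\eta_1$ and $\frac{-n\varepsilon - 1}{n}$ on $\eta_2$ defining the $n$-fold annulus twist coincide with the coefficients $\frac{nl+1}{n}$ and $\frac{nl-1}{n}$ appearing in Theorem 2.1, so $\K_{(B,b,\varepsilon),n} = \K_{(\phi_\A, n)}$.

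It then suffices to verify that $(\{\eta_1, \eta_2\}, \phi_\A)$ is $l$-nice, with $l = -\varepsilon$. Condition (2) of Definition 2.1 follows from the local picture of $B$ in $\CCC$: the link $\{\eta_1, \eta_2\} = \partial B'$ sits inside a $3$-ball neighborhood as the standard link $L_{-\varepsilon}$. Condition (3) is verified by a band-sum argument parallel to that of Remark 3.2: performing a band sum of $\eta_1$ and $\eta_2$ along a parallel copy of $b$ reveals that the curve $c$ of Figure 5 bounds a smoothly embedded disk in $\B \setminus \N$, so $[c] = 1$ in $\pi_1(\B \setminus \N)$ and the condition is automatic.

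The key verification is condition (1): we must exhibit a smooth slice disk $\D$ for $\K$ with $\A \cap \D = \emptyset$, and this is precisely where the ribbon hypothesis is used. Fix a ribbon disk $\D$ for $\K$ in $\B$. Since $B'$ is an interior shrink of $B$, we may isotope so that all ribbon singularities of $b \cap B$ lie outside $B'$, giving $\K \cap B' = \emptyset$ in $\CCC$. Then, having pushed $B'$ into the thin collar $\CCC \times [0, \epsilon]$ to form $\A$, we arrange $\D$ to be collared near $\partial \B$ (so that $\D$ restricted to the collar equals $\K \times [0, \epsilon]$), and the disjointness $\K \cap B' = \emptyset$ in $\CCC$ lifts immediately to $\D \cap \A = \emptyset$ in $\B$.

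With $l$-niceness of $(\{\eta_1, \eta_2\}, \phi_\A)$ established, Theorem 2.1 yields that $\K_{(B,b,\varepsilon),n} = \K_{(\phi_\A, n)}$ is exotically slice for every integer $n$, completing the proof. The principal obstacle is condition (1): one must simultaneously choose the interior pushoff $B'$ (pushing the ribbon singularities of $b$ outside of $B'$) and the ribbon disk $\D$ (collared near $\partial \B$) so that they are disjoint in $\B$. The remaining conditions (2) and (3) follow routinely from the standard form of the annulus presentation and an argument parallel to Remark 3.2.
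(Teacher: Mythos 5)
Your reduction to Theorem 2.1 and the bookkeeping of coefficients ($l=-\varepsilon$) are fine, but the heart of your argument --- condition (1) for the pushed-in copy of $B'$ --- has a genuine gap. You claim that, after isotopy, all ribbon singularities of $b \cap \mathring{B}$ lie outside $B'$, so that $\K \cap B' = \emptyset$. This is false for an honest annulus presentation: each ribbon arc of $b \cap \mathring{B}$ has its two endpoints on the long sides of the band $b$, i.e.\ on $\K$ itself, and these points sit in the interior of $B$, hence in $B'$ after a slight shrink (look at Figures 16--17 for $8_{20}$: the band, and with it $\K$, visibly pierces $B'$). These intersections cannot be isotoped away --- if $\K$ could be made disjoint from $B'$, the $n$-fold annulus twist would act trivially on $\K$ and the statement would be vacuous. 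Consequently your collar argument collapses: near $\partial \B$ any slice disk is a collar $\K \times [0,\epsilon]$, and since $\K$ meets $B'$, the graph-type push-in of $B'$ meets that collar. So with your choice of annulus there is no evident slice disk satisfying $\A \cap \D = \emptyset$, and nothing in your argument (note that it never actually uses ribbonness) supplies one.

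The paper's proof takes a different annulus precisely to dodge this: it performs a band sum of $\eta_1$ and $\eta_2$ along a band following $b$ (Figure 18), observes that the resulting curve together with $\K$ is the $(2,0)$-cable of $\K$, which is a slice link, and caps off with disjoint slice disks. This produces simultaneously a slice disk $\D$ for $\K$ and an annulus $\A$ cobounded by $\eta_1$ and $-\eta_2$ that is disjoint from $\D$ by construction, giving conditions (1) and (2). The ribbon hypothesis enters not in condition (1) but in condition (3): because $\K$ is ribbon, $\A$ can be built with no local maxima, so inclusion induces a surjection $\pi_1(\CCC - \text{N}(\{\eta_1,\eta_2\})) \twoheadrightarrow \pi_1(\B - \N)$; since the boundary link is a Hopf link its group is $\mathbb{Z}^2$, forcing $\pi_1(\B-\N)$ to be abelian, hence $\mathbb{Z}$ generated by $[\mu_1]$, and $[c]$ is trivial there. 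Your appeal to ``an argument parallel to Remark 3.2'' does not substitute for this, since that remark assumes the annulus is isotopic to the standard $\A_l$, which is exactly what is unknown here. To repair your write-up you would need to replace the boundary-parallel annulus by the paper's band-sum-plus-cable construction (or otherwise produce a disk disjoint from your annulus), and then use ribbonness where it is actually needed, in verifying condition (3).
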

\begin{proof}
We will use $B'$, $\eta_1$, and $\eta_2$ described above and in Figure $17$. Note that by Theorem $2.1$ it will be enough to show that there exists a smooth proper embedding $\phi_\A : \C \times [0, 1] \hookrightarrow \B$ such that $\text{Im}(\left. \phi_\A \right |_{\C \times \{ 0 \}})= \eta_1$, $\text{Im}(\left. \phi_\A \right |_{\C \times \{ 1 \}})= \eta_2$ and $(\{ \eta_1, \eta_2 \}, \phi_\A)$ is $(-\varepsilon)$-nice. We can find $\phi_\A$ by performing a band sum as in Figure $18$.

The resulting link after performing a band sum is the $(2,0)$ cable of $\K$ which is a slice link, so we can cap off each component. Then we obtain a slice disk for $\K$ and a smooth proper embedding of an annulus $\phi_\A$ which is disjoint from the slice disk. This guarantees the condition $(1)$ and $(2)$ from the Definition 2.1.

Further, we assumed that $\K$ is a ribbon knot. Then by construction of the annulus $\A$ we know that $\A$ does not contain any local maximum. This implies we have a  surjective map induced by inclusion: $i_* : \pi_1(\CCC - \text{N}(\{ \eta_1, \eta_2 \})) \twoheadrightarrow \pi_1(\B-\N)$. Since $\pi_1(\CCC - \text{N}(\{ \eta_1, \eta_2 \})) = \mathbb{Z}^2$, $\pi_1(\B-\N)$ is an abelian group. Hence $\pi_1(\B-\N)=H_1(\B-\N)=\mathbb{Z}$ which is generated by $[\mu_1] \in H_1(\B-\N)$ where $\mu_1$ is a meridian of $\eta_1$. Then it is easy to check $[c]$ represents trivial element in $\pi_1(\B-\A)$ hence $(\{ \eta_1, \eta_2 \}, \phi_\A)$ satisfies condition $(3)$ from the Definition 2.1.
This implies that $(\{ \eta_1, \eta_2 \}, \phi_\A)$ is $(-\varepsilon)$-nice and concludes the proof.
\end{proof}

\begin{figure}[h]
\centering
\includegraphics[width=5in]{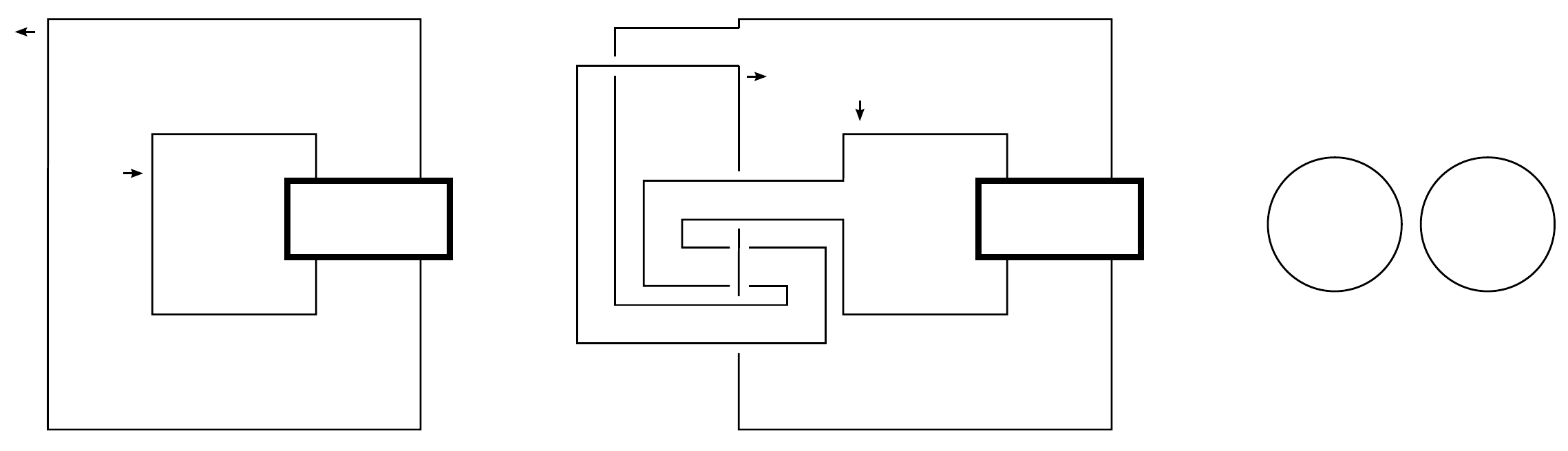}
  \put(-4.5,-0.1){$\{ \eta_1, \eta_2 \}$}  
  \put(-2.1,-0.1){$8_{20}$}  
  \put(-0.7,-0.1){Unlink}  
  \put(-3.86,0.7){$1$}  
  \put(-1.66,0.7){$1$}  
  \put(-3.4,0.7){$\Rightarrow$}
  \put(-1.2,0.7){$\Rightarrow$}
\caption{The second figure is obtained by adding a band between $\eta_1$ and $\eta_2$ as in Figure $18$. The third figure is obtained by doing a ribbon move on $8_{20}$ as in Figure $23$ at the end of this paper. Arrows indicate where the band comes out and get attached to the other component.}
\end{figure}

\begin{figure}[h]
\centering
\includegraphics[width=5in]{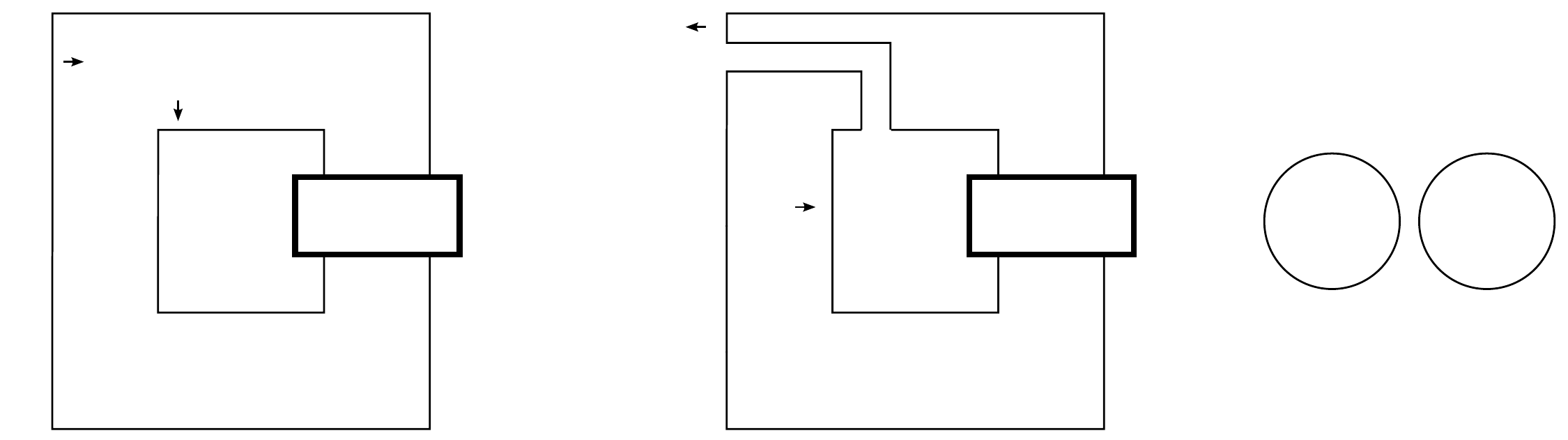}
  \put(-4.5,-0.1){$\{ \eta_1, \eta_2 \}$}  
  \put(-2.3,-0.1){Unknot}  
  \put(-0.7,-0.1){Unlink}  
  \put(-3.84,0.68){$1$}  
  \put(-1.68,0.68){$1$} 
  \put(-3.4,0.7){$\Rightarrow$}
  \put(-1.2,0.7){$\Rightarrow$}
\caption{The second figure is obtained by adding a band between $\eta_1$ and $\eta_2$ as in Figure $23$ at the end of this paper. The third figure is obtained by doing a Ribbon move on an unknot as in Figure $18$. Arrows indicate where the band comes out and get attached to the other component. Note that these two annuli described in Figure $19$ and $20$ are isotopic.}
\end{figure}

Further in \cite{AT14} Abe and Tange showed that if $\K$ is a ribbon knot admitting an annulus presentation $(B,b,\varepsilon)$ where $\varepsilon$ is $1$ or $-1$, then $\K_{(B,b,\varepsilon),n}$ is smoothly slice for any integer $n$. We will reprove this in a very specific case, namely when $\K$ is $8_{20}$ knot, using annulus modifications.

\begin{proposition} Let $\K$ be the $8_{20}$ knot with annulus presentation $(B,b,-1)$ as in Figure $16$. Then $\K_{(B,b,-1),n}$ is smoothly slice for any integer $n$.
\end{proposition}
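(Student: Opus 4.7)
The plan is to strengthen Proposition 5.1 in this specific case from exotically slice to smoothly slice by upgrading the $1$-nice pair constructed there to a $1$-standard pair, so that Theorem 3.1 applies in place of Theorem 2.1. With $\varepsilon = -1$ we have $lk(\eta_1,\eta_2) = -\varepsilon = 1$, so the task reduces to exhibiting a smooth proper isotopy from the annulus $\A$ bounded by $\eta_1 \cup -\eta_2$ to the model annulus $\A_1$ of Figure 7.

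First I would reproduce the construction of Proposition 5.1 in this setting: push $B$ slightly into its own interior to obtain $B'$ with $\partial B' = \eta_1 \cup -\eta_2$, then perform the band sum along a band following $b$ as in Figure 18 to obtain the $(2,0)$-cable of $8_{20}$, and cap each component of the cable off with a disk. This yields a ribbon disk $\D$ for $8_{20}$ and a proper smooth embedding $\phi_\A$ of an annulus $\A$ disjoint from $\D$; by the argument of Proposition 5.1 the triple $(\{\eta_1,\eta_2\},\phi_\A)$ is $1$-nice. The core of the proof then rests on the comparison displayed in Figures 19 and 20: Figure 19 shows the band sum applied to the $8_{20}$ presentation followed by a ribbon move yielding the unlink, while Figure 20 carries out a parallel construction starting from the unknot, where a different band sum already yields the unknot directly. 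Since the annulus produced from the unknot in Figure 20 is, after an evident proper isotopy pushing its interior toward $\CCC$, exactly the standard annulus $\A_1$ of Figure 7, it suffices to prove that the annuli $\A$ and $\A'$ arising from Figures 19 and 20 are smoothly isotopic through proper embeddings.

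The main obstacle, and the content of the caption of Figure 20, is precisely this isotopy. I would construct it locally: outside a product neighborhood of the band $b$ the two pictures agree, and within that neighborhood the ribbon singularity of $b$ is exactly what the band sum of Figure 19 cancels against the extra crossings of $8_{20}$, converting it into the unknot of Figure 20 without disturbing $\eta_1 \cup \eta_2$ or the tubular neighborhood of the annulus. Once this identification is established, the composition $\A \simeq \A' \simeq \A_1$ shows that $(\{\eta_1,\eta_2\},\phi_\A)$ is $1$-standard; condition (3) of Definition 2.1 is then automatic by the remark following Definition 3.1, so Theorem 3.1 with $l=1$ produces a smoothly slice knot. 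A direct comparison with Figure 17 identifies this knot as $\K_{(B,b,-1),n}$ for every integer $n$, completing the proof.
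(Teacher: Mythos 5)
Your overall strategy is the same as the paper's: upgrade the $1$-nice pair from Proposition 5.1 to a $1$-standard one and invoke Theorem 3.1, checking that the surgery coefficients match Figure 17. But there is a genuine gap at the decisive step. You assert that the annulus of Figure 20 is, ``after an evident proper isotopy pushing its interior toward $\CCC$, exactly the standard annulus $\A_1$.'' That is not evident, and it is precisely the nontrivial content of the proof. After reordering the two band sums so that the first band turns $\{\eta_1,\eta_2\}$ into an unknot, the annulus is not finished: what remains below that level is a ribbon disk for that unknot with one more saddle and two local minima. Such a disk is not obviously boundary-parallel --- in general it is a hard question whether a smooth disk in $\B$ bounded by the unknot is standard --- and the paper closes exactly this gap by citing Scharlemann \cite[Corollary page 127]{Sc85} (resting on his main theorem, quoted as Theorem 3.3), which says that a ribbon disk with two local minima bounded by the unknot can be isotoped to the standard disk. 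Only after that standardization does one conclude $\phi_\A\simeq\phi_{\A_1}$. Your proposal never supplies this ingredient or a substitute for it, so the claimed identification with $\A_1$ is unsupported.

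A secondary, smaller issue: your ``local'' argument that the annuli of Figures 19 and 20 agree --- that the ribbon singularity of $b$ ``cancels against the extra crossings of $8_{20}$'' --- is not really an argument. The correct justification, and the one implicit in the paper, is simply that the two bands (the band following $b$ and the ribbon-move band of Figure 23) are disjoint, so the two saddle levels of the movie can be interchanged by an isotopy of the annulus; this exchanges the order of the band sums and produces Figure 20 from Figure 19. With that phrased correctly, and with Scharlemann's corollary added to standardize the remaining two-minimum ribbon disk, your outline becomes the paper's proof.
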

\begin{proof}

Let $\phi_\A$ be the smooth proper embedding of an annulus described in the proof of Proposition $5.1$. By Theorem $3.1$, it is enough to show that $(\{ \eta_1, \eta_2 \}, \phi_\A)$ is $1$-standard. In other words it is enough to show that $\phi_\A$ is isotopic to $\phi_{\A_1}$ which was described in Section $2$. Note that we can find a ribbon disk for $\K$ by attaching a band as in Figure $23$ at the end of this paper. 

Then in the process of getting $\phi_\A$ we will change the order of band sum by isotopy as in Figure $19$ and $20$. Notice that the knot becomes an unknot after the first band sum (see Figure $20$). Using Scharlemann's corollary in \cite[Corollary page 127]{Sc85} we can isotope the rest of the annulus which is a ribbon disk with two local minima to be a standard disk. This implies that $\phi_\A$ is isotopic to $\phi_{\A_1}$ as needed.
\end{proof}

\section{Annulus modifications on general annuli}\label{Annulus modifications on general annuli}

In this section we will consider more general annuli that we can apply annulus modifications on. We will restrict our attention to the topological category, since we are considering general annuli. More precisely, we will perform an $n$-twist annulus modification to a smooth compact $4$-manifold $\M$ where $\M$ is homeomorphic to $\B$, but not necessarily diffeomorphic to the standard $\B$. When the resulting manifold after an $n$-twist annulus modification is homeomorphic to $\B$, the resulting knot will be exotically slice but not necessarily smoothly slice. We restate the Theorem $2.3$ for the general case.

\begin{theorem}\label{general case} Let $K$ be an exotically slice knot, bounding a smoothly embedded $2$-disk $\D$ in $\M$ where $\M$ is homeomorphic to $\B$. Let $\{ \eta_1, \eta_2 \}$ be an oriented link in $\CCC - K$ and let $\phi_\A : \C \times [0, 1] \hookrightarrow \M$ be a smooth proper embedding of an annulus with $\text{Im}(\left. \phi_\A \right |_{\C \times \{ 0 \}})= \eta_1$, $\text{Im}(\left. \phi_\A \right |_{\C \times \{ 1 \}})= \eta_2$, $\text{Im}(\phi_\A)= \A$, and $l=lk(\eta_1,\eta_2)$.
Suppose $\M_{(\phi_\A,n)}$, the $n$-twist annulus modification on $\M$ at $\phi_\A$, is homeomorphic to $\B$ for an integer $n$ and $\A \cap \D = \emptyset$. Then $\frac{nl+1}{n}$ Dehn surgery on $\eta_1$ followed by $\frac{nl-1}{n}$ Dehn surgery on $\eta_2$ will produce an exotically slice knot $\K_{(\phi_\A,n)} \subseteq \CCC$, where $\K_{(\phi_\A,n)}$ is the image of $\K$ in the new $3$-manifold $\CCC$.\end{theorem}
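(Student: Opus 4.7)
The plan is to follow the proof of Theorem 2.3 almost verbatim, with the observation that the more delicate pieces of that argument are already subsumed by the hypotheses of this theorem. The proof of Theorem 2.3 had three parts: verifying that $\B_{(\phi_\A,n)}$ is a homology ball via Mayer--Vietoris, that it is simply connected via Seifert--van Kampen together with condition (3) of $l$-niceness, and that its boundary is $\CCC$ via the Rolfsen manipulations of Figure 6. The standing assumption here that $\M_{(\phi_\A,n)}$ is already homeomorphic to $\B$ disposes of the first two at one stroke, so only the boundary identification and the persistence of the slice disk remain.

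First I would observe that since $\A\cap \D=\emptyset$, we may shrink the tubular neighborhood $\N$ of $\A$ so that it is also disjoint from $\D$. The $n$-twist annulus modification removes only $\N$ from $\M$ and reglues along $\partial\N$, so the disk $\D$ is left entirely untouched and persists as a smoothly embedded disk in $\M_{(\phi_\A,n)}$ whose boundary is the image $\K_{(\phi_\A,n)}$ of $K$ in $\partial \M_{(\phi_\A,n)}$. In particular, $\K_{(\phi_\A,n)}$ smoothly bounds a $2$-disk in $\M_{(\phi_\A,n)}$.

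Next I would identify the boundary. The computation is the same as in the final paragraph of the proof of Theorem 2.3: the gluing diffeomorphism $f_n$, defined from the element $\rho_n\in \mathrm{Aut}(\C\times\C)$ specified in Section 2, sends the meridian of the reglued solid torus at the level $t=0$ to $n\lambda_1+(nl+1)\mu_1$ and the meridian at $t=1$ to $n\lambda_2+(nl-1)\mu_2$, so the surgery coefficients are $\frac{nl+1}{n}$ on $\eta_1$ and $\frac{nl-1}{n}$ on $\eta_2$. The Rolfsen manipulations of Figure 6 then identify this surgered manifold with $\CCC$. Nothing changes from the $\B$ case, because $\M$ being homeomorphic to $\B$ forces $\partial \M$ to be a topological $3$-sphere and hence diffeomorphic to the standard $\CCC$, so the surgery description takes place in the same ambient manifold as before.

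Finally, with $\M_{(\phi_\A,n)}$ homeomorphic to $\B$ by hypothesis and $\K_{(\phi_\A,n)}\subseteq \partial \M_{(\phi_\A,n)}\cong \CCC$ bounding a smooth $2$-disk in $\M_{(\phi_\A,n)}$, the conclusion that $\K_{(\phi_\A,n)}$ is exotically slice is immediate from the definition. There is no genuine obstacle in this argument; the substantive content has been packaged into the hypothesis that the modified $4$-manifold is homeomorphic to $\B$, since without recourse to an $l$-nice type condition one has no general machinery ensuring this, and verifying it is precisely the difficulty one shifts responsibility for.
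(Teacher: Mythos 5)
Your overall strategy is the one the paper intends: Theorem 6.1 is presented as a restatement of Theorem 2.3 in which the genuinely hard parts (the Mayer--Vietoris and Seifert--van Kampen arguments, i.e.\ the verification that the modified manifold is a simply connected homology ball) are absorbed into the hypothesis that $\M_{(\phi_\A,n)}$ is homeomorphic to $\B$, and the paper supplies no separate proof. Your two surviving steps --- that $\D$ persists because $\A \cap \D = \emptyset$ (so one may shrink $\N$ off $\D$), and that the meridian images under $f_n$ give surgery coefficients $\frac{nl+1}{n}$ on $\eta_1$ and $\frac{nl-1}{n}$ on $\eta_2$ --- are exactly the content that remains, and the coefficient computation indeed uses only $l = lk(\eta_1,\eta_2)$, not the isotopy class of the link.

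However, one step as written is wrong: you cannot invoke the Rolfsen manipulations of Figure 6 to identify the surgered boundary with $\CCC$. Those moves use condition (2) of $l$-niceness --- that $\{\eta_1,\eta_2\}$ is the specific link $L_l$, with the auxiliary $\infty$-framed circle --- and Section 6 exists precisely to drop that condition; for a general annulus, $\frac{nl+1}{n}$ and $\frac{nl-1}{n}$ surgery on $\{\eta_1,\eta_2\} \subseteq \partial\M$ need not a priori yield $\CCC$ at all. The correct (and simpler) justification is that your hypothesis already settles it: since $\M_{(\phi_\A,n)}$ is homeomorphic to $\B$, its boundary is a topological $3$-sphere, and a smooth $3$-manifold homeomorphic to $\CCC$ is diffeomorphic to the standard $\CCC$ by the uniqueness of smooth structures in dimension three. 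This is consistent with how the paper proceeds in Proposition 6.2, where the $\CCC$-ness of the boundary is checked by a bespoke helper-circle argument rather than by Figure 6. With that one substitution your proof is complete: $\K_{(\phi_\A,n)}$ bounds a smooth $2$-disk in a smooth $4$-manifold homeomorphic to $\B$, hence is exotically slice.
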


It is a natural question to ask if there exists a smooth proper embedding of an annulus $\phi_\A : \C \times [0, 1] \hookrightarrow \M$ where $\M_{(\phi_\A,n)}$ is homeomorphic to $\B$ for non-zero $n$, while $\{ \eta_1, \eta_2 \}$ is not isotopic to $L_l$ (see Figure $5$). The following proposition gives us plentiful examples of such smooth proper embedding of an annuli.

\begin{figure}[h]
\centering
\includegraphics[width=6in]{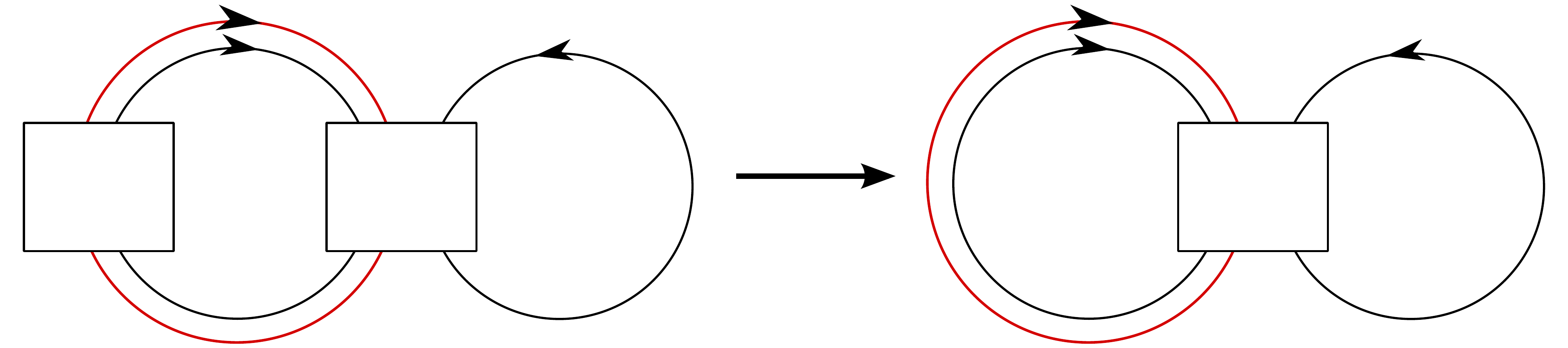}
  \put(-5,0.3){$\eta_1$}  
    \put(-4.1,0.3){$\eta_2$}  
        \put(-4.7,1.2){$c$}  
  \put(-5.67,0.58){$\K$}  
  \put(-4.49,0.58){$1$}  
    \put(-1.23,0.58){$1$}  
    \put(-3.29,0.4){concordance}  
\caption{Positive Hopf link with a knot $\K$ tied up in the first component.}
\end{figure}
\begin{proposition} Let $\{ \eta_1, \eta_2 \}$ be the positive Hopf link with a knot $\K$ tied up in the first component $\eta_1$ (see Figure $21$). If $\K$ is exotically slice in $\M$, then there exists a smooth proper embedding of an annulus $\phi_\A : \C \times [0, 1] \hookrightarrow \M$ with $\text{Im}(\left. \phi_\A \right |_{\C \times \{ 0 \}})= \eta_1$, $\text{Im}(\left. \phi_\A \right |_{\C \times \{ 1 \}})= \eta_2$ where $\M_{(\phi_\A,1)}$ is homeomorphic to $\B$.
\end{proposition}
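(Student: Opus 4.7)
Since $\K$ is exotically slice in $\M$, fix a smooth slice disk $\D \subset \M$ with $\partial \D = \eta_1$. Because $\eta_2$ is an unknotted meridian of $\eta_1$ in $\CCC = \partial \M$, it bounds a meridional disk $d_0 \subset \CCC$ meeting $\eta_1$ transversely in a single point. Push the interior of $d_0$ slightly into $\M$ to obtain a disk $d$, and, after a small perturbation, arrange that $d$ meets $\D$ transversely in exactly one interior point. Tube $\D$ and $d$ at this intersection---that is, remove small coordinate disks around the intersection from each and attach a thin connecting annulus inside a local $4$-ball. The result is a smooth properly embedded annulus $\A \subset \M$ with $\partial \A = \eta_1 \sqcup (-\eta_2)$; let $\phi_\A$ be the corresponding parametrization. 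Note $lk(\eta_1,\eta_2) = 1$.

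For the boundary of $\M_{(\phi_\A,1)}$, since $l=1$ and $n=1$ the theorem's formula says we perform $2$-surgery on $\eta_1 = \K$ and $0$-surgery on $\eta_2$ in $\CCC$. In the Kirby picture, the $0$-framed unknotted component $\eta_2$ may be reinterpreted as a dotted $1$-handle; since $\eta_1$ passes through its belt sphere exactly once (as $\eta_2$ is a geometric meridian of $\eta_1$), this $1$-handle cancels the $2$-handle attached along $\eta_1$, leaving the empty diagram. Hence $\partial \M_{(\phi_\A,1)} = \CCC$.

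For the interior, I imitate the Seifert--van Kampen argument from the proof of Theorem $2.3$. Writing $\M_{(\phi_\A,1)} = (\M \setminus \N) \cup_{f_1} (\C \times [0,1] \times \D)$, the same manipulation gives
\[
\pi_1(\M_{(\phi_\A,1)}) = \pi_1(\M \setminus \N) \,\big/\, \langle\, [c]\cdot (i_1)_*([\mu_1])\, \rangle.
\]
Because $\pi_1(\M) = 1$, the class $(i_1)_*([\mu_1])$ normally generates $\pi_1(\M \setminus \N)$, so the quotient is trivial once one checks that $[c]$ lies in the normal closure of $(i_1)_*([\mu_1])$. For our tubed $\A$, I claim in fact that $[c]=1$ in $\pi_1(\M \setminus \N)$: the curve $c = \mu_1 \lambda_1$ sits near the Hopf link on $\CCC$, while the tubing region of $\A$ is a small ball deep in $\M$; isotoping $c$ to the complement of the Hopf link in $\CCC$ and then performing the band sum of $\eta_1$ and $\eta_2$ along a band that follows the tube (the direct analogue of the picture in Figure $8$) exhibits $c$ as bounding a disk disjoint from $\A$. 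A parallel Mayer--Vietoris argument (the one the paper omits in the proof of Theorem $2.3$) then shows $\M_{(\phi_\A,1)}$ has the integral homology of a point. Freedman's topological Poincar\'e conjecture \cite{Fr84} then gives $\M_{(\phi_\A,1)} \cong_{\mathrm{top}} \B$.

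\textbf{Main obstacle.} The delicate point is the $\pi_1$ calculation: the $l$-nice condition $(3)$ is \emph{not} assumed here, so I cannot invoke Remark $3.2$ directly. The work is to verify, using the explicit construction of $\A$ from a (possibly smoothly exotic) slice disk $\D$ tubed to a meridional disk $d$, that the curve $c$ bounds in $\M \setminus \N$. The point is that the obstruction picture involves only a local neighborhood of the Hopf link and the tube, in which everything is standard, independent of whether $\D$ is smoothly standard in $\M$; this is what makes the argument go through even though $\M$ is only a homotopy ball.
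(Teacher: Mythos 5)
Your proposal is correct and follows essentially the same route as the paper: your annulus (the exotic slice disk tubed to a pushed-in meridional disk at their single transverse intersection) is isotopic to the paper's annulus obtained by capping the concordance of Figure 21 with the Hopf annulus, and the rest of your argument---Mayer--Vietoris for homology, killing $\pi_1$ by showing $[c]$ dies in $\pi_1(\M\setminus\N)$ via the band-sum/Figure 8 mechanism together with the computation from Theorem 2.3, and then Freedman---mirrors the paper step by step. The only (harmless) variations are that you identify $\partial\M_{(\phi_\A,1)}=\CCC$ by trading the $0$-framed meridian for a dotted circle and cancelling the $1$-/$2$-handle pair, where the paper uses $\eta_2$ as a helper circle plus Rolfsen twists, and your remark that the tube lies ``deep in $\M$'' is inconsistent with your own construction (it sits in the collar near the clasp), which is in fact what makes the locality argument for $[c]=1$ work.
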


\begin{proof} Since $\eta_1$ is exotically slice in $\M$, $\{ \eta_1, \eta_2 \}$ is concordant to a positive Hopf link in $\M$. Hence, we can simply cap off the concordance with an annulus that positive Hopf link bounds in $\CCC$ to achieve a smooth proper embedding of an annulus $\phi_\A : \C \times [0, 1] \hookrightarrow \M$ with $\text{Im}(\left. \phi_\A \right |_{\C \times \{ 0 \}})= \eta_1$, $\text{Im}(\left. \phi_\A \right |_{\C \times \{ 1 \}})= \eta_2$. Now we need to show $\M_{(\phi_\A,1)}$ is homeomorphic to $\B$.

It is easy to check that $\M_{(\phi_\A,1)}$ is a homology $\B$ by using a Mayer-Vietoris sequence. In order to see that $\M_{(\phi_\A,1)}$ is simply connected we can use the same argument from the Remark $3.2$. By using a concordance described in Figure $21$, the curve $c$ from Figure $21$ bounds a smoothly embedded disk in $\M-\N$ for the same reason from Remark $3.2$ (see Figure $8$). Therefore, $[c]$ represents a trivial element in $\pi_1(\M-\N)$ which implies that $\pi_1(\M_{(\phi_\A,1)}) = \{ id \}$ by applying the same argument from the proof of Theorem $2.3$.

Lastly, we see that on the boundary we have $2$ Dehn surgery on $\eta_1$ followed by $0$ Dehn surgery on $\eta_2$. Then we can use $\eta_2$ as a helper circle to undo crossings of $\eta_1$ to get a positive Hopf link with coefficient $2$ on one component and $0$ on the other component. Hence we have that $\partial \M_{(\phi_\A,1)}$ is $\CCC$. Then again by the 4-dimensional topological Poincar\'e conjecture we can conclude that $\M_{(\phi_\A,1)}$ is homeomorphic to $\B$ \cite[Theorem $1.6$]{Fr84}.
\end{proof}

Notice that for a link $\{ \eta_1, \eta_2 \}$ isotopic to the negative Hopf link with a knot $\K$ tied up in the first component $\eta_1$, we can also apply the same argument if $\K$ is exotically slice in $\M$. The only difference is that the linking number of $\eta_1$ and $\eta_2$ is negative one. Hence we have a smooth proper embedding of an annulus $\phi_\A : \C \times [0, 1] \hookrightarrow \M$ with $\text{Im}(\left. \phi_\A \right |_{\C \times \{ 0 \}})= \eta_1$ and $\text{Im}(\left. \phi_\A \right |_{\C \times \{ 1 \}})= \eta_2$, where $\M_{(\phi_\A,-1)}$ is homeomorphic to $\B$. In fact there are more examples of such links. For instance, let $\{ \eta_1, \eta_2 \}$ be the Hopf link with a knot $\K$ tied up in the first component $\eta_1$ and assume $\K$ is concordant to an unknotting number one knot. Then by the similar argument from the proof of Proposition $6.2$, it is easy to see that there exists a smooth proper embedding of an annulus $\phi_\A : \C \times [0, 1] \hookrightarrow \M$ with $\text{Im}(\left. \phi_\A \right |_{\C \times \{ 0 \}})= \eta_1$ and $\text{Im}(\left. \phi_\A \right |_{\C \times \{ 1 \}})= \eta_2$, where either $\M_{(\phi_\A,1)}$ or $\M_{(\phi_\A,-1)}$ is homeomorphic to $\B$. By using these general annuli we have the following theorem, which tells us that any exotically slice knot can be obtained by the image of the unknot in the boundary of a smooth $4$-manifold homeomorphic to $\B$ after an annulus modification.

\begin{figure}[h]
\centering
\includegraphics[width=6in]{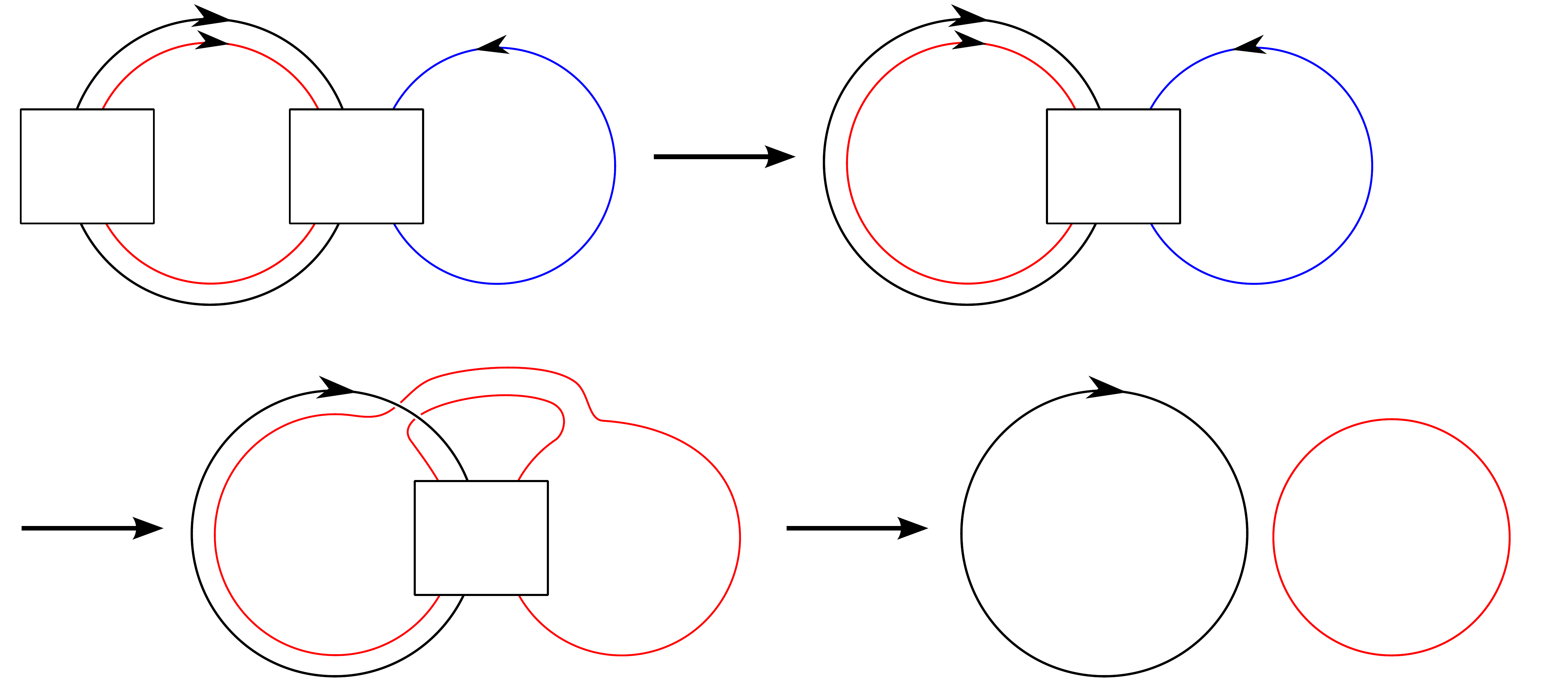}
  \put(-5.72,1.96){$\K$}  
  \put(-4.65,1.96){$1$}  
  \put(-1.75,1.96){$1$}  
    \put(-4.17,0.54){$1$} 
      \put(-4.9,2.55){$\K$}  
        \put(-5.1,1.7){$\eta_1$}  
	\put(-4.3,1.7){$\eta_2$}  
\caption{Smoothly embedded $2$-disk $\D$ and a smooth proper embedding of an annulus $\phi_{\A}$. The second figure is obtained by a concordance. The third figure is obtained by adding a band between the red curve and the blue curve. The fourth figure is obtained by isotopy of the red curve.}
\end{figure}

\begin{theorem}\label{general case} Let $K$ be an exotically slice knot in $\M$ where $\M$ is homeomorphic to $\B$. Then there exists a $\M'$ which is homeomorphic to $\B$, the unknot $U \subseteq \partial{{\M}'} = \CCC$, and a smooth proper embedding of an annulus $\phi_{\A'} : \C \times [0, 1] \hookrightarrow \M'$ with following properties:
\begin{itemize}
\item $\text{Im}(\left. \phi_{\A'} \right |_{\C \times \{ 0 \}})= \eta'_1$, $\text{Im}(\left. \phi_{\A'} \right |_{\C \times \{ 1 \}})= \eta'_2$, $\text{Im}(\phi_{\A'})= \A'$, and $1=lk(\eta'_1,\eta'_2)$.
\item $\A' \cap D' = \emptyset$ where $D'$ is a smoothly embedded $2$-disk in $\M'$ which bounds the unknot $U$.
\item The $-1$-twist annulus modification on $\M'$ at $\phi_{\A'}$, $\M'_{(\phi_{\A'},-1)}$, is homeomorphic to $\M$.
\item The image of the unknot $U$ after the $-1$-twist annulus modification on $\M'$ at $\phi_{\A'}$, $U_{(\phi_{\A'},-1)}$, is $\K \subseteq \CCC = \partial{\M}$.
\end{itemize}
\end{theorem}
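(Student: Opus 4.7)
The plan is a direct construction: since $\M$ is already homeomorphic to $\B$ by hypothesis, I take $\M' = \M$ and explicitly produce the unknot $U$, its disk $D'$, and the annulus $\phi_{\A'}$, following the four-panel recipe depicted in Figure $22$. All four listed properties then follow from Theorem $6.1$ together with a boundary Kirby computation identifying $U_{(\phi_{\A'},-1)}$ with $K$.

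For the construction, begin with $K\subseteq\partial\M$ together with its slice disk $\D\subseteq\M$ (panel $1$). Place a $+1$-framed meridian $\eta_1$ of $K$, and tube $\D$ with the meridional disk of $\eta_1$ to obtain a smooth properly embedded annulus in $\M$ cobounded by $K$ and $\eta_1$. Using the room provided by $\D$, slide this annulus via a concordance in $\M$ (panel $2$), attach a band between the resulting strands (panel $3$), and apply an isotopy (panel $4$). The outcome is a two-component link $\{\eta_1',\eta_2'\}\subseteq\partial\M$ with $lk(\eta_1',\eta_2')=1$, a smoothly embedded annulus $\phi_{\A'}$ cobounding $\{\eta_1',-\eta_2'\}$ in $\M$, and an unknot $U\subseteq\partial\M$ that is the image of $K$ under the band move and is disjoint from $\{\eta_1',\eta_2'\}$. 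A small disk $D'\subseteq\M$ bounded by $U$ can be chosen in the region vacated by the band, so that $\A'\cap D'=\emptyset$.

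For the verification, recall from the proof of Theorem $2.3$ that the $-1$-twist annulus modification with $l=1$ corresponds to $0$-surgery on $\eta_1'$ and $2$-surgery on $\eta_2'$ in $\partial\M=\CCC$. A Rolfsen-calculus simplification using $\eta_2'$ as a helper circle (analogous to Figure $6$ and to the boundary argument in the proof of Proposition $6.2$) returns the $3$-manifold to $\CCC$ and transforms $U$ into $K$ by inverting the band move of panel $3$. On the $4$-manifold side, Mayer–Vietoris and a Seifert–van Kampen computation identical to those of Theorem $2.3$ show that $\M_{(\phi_{\A'},-1)}$ is a simply connected integer homology $\B$; the analogue of condition $(3)$ in Definition $2.1$ holds because the relevant curve bounds a disk in $\M-\N$ built from $\D$ together with the band, precisely as in Remark $3.2$ and the proof of Proposition $6.2$. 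Freedman's theorem then yields $\M_{(\phi_{\A'},-1)}\cong_{\mathrm{top}}\B\cong_{\mathrm{top}}\M$, and Theorem $6.1$ applies to complete the proof.

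The main obstacle I anticipate is the precise choice of band in panel $3$. It must simultaneously (i) convert $K$ to an unknot $U$ on the boundary, thereby realizing a local inverse to the $-1$-twist modification, and (ii) be routable disjointly from $\D$ so that $\phi_{\A'}$ avoids the disk $D'$. The slice disk $\D$ of $K$, together with the concordance of panel $2$, is precisely what provides the room inside $\M$ needed to arrange both requirements; once the band is correctly identified, the remaining topological checks reduce to the arguments already in Theorem $2.3$ and Proposition $6.2$.
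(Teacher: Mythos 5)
There is a genuine gap, and it begins with your opening move of setting $\M' = \M$. The mechanism the paper uses (and which your write-up never replaces) is that annulus modifications are invertible: one takes the annulus of Proposition $6.2$ --- cobounded by the positive Hopf link with $\K$ tied into $\eta_1$, capped off through a concordance coming from the slice disk --- together with a disk for $\K$ disjoint from it (Figure $22$), performs the \emph{forward} $+1$-twist modification, and defines $\M' := \M_{(\phi_\A,1)}$. Proposition $6.2$ gives that $\M'$ is homeomorphic to $\B$, an explicit handle-slide/Rolfsen computation (Figure $24$) shows that $U := \K_{(\phi_\A,1)}$ is the unknot, and then the $-1$-twist modification along the image annulus literally reglues $\N$ the original way, so it returns $(\M,\K)$ on the nose; all four bullets follow. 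Crucially, $\M'$ is only known to be \emph{homeomorphic} to $\M$, not diffeomorphic, so one cannot simply declare $\M'=\M$. If you insist on $\M'=\M$, you must exhibit inside $\M$ itself an unknot $U$, a disk $D'$, and an annulus $\A'$ cobounded by $\eta'_1$ and $-\eta'_2$, disjoint from $D'$, such that $0$-surgery on $\eta'_1$ and $2$-surgery on $\eta'_2$ carries $(\CCC,U)$ to $(\CCC,\K)$ and the modified manifold is simply connected. Your justification of the decisive step is ``transforms $U$ into $\K$ by inverting the band move of panel $3$,'' but a band move on the boundary is not a pair of Dehn surgeries, and no Kirby-calculus verification is offered; that verification is exactly the content of Figure $24$, and it is only set up after the forward modification has been performed, i.e.\ in $\M_{(\phi_\A,1)}$ rather than in $\M$.

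The construction you sketch also does not cohere as stated. Tubing the slice disk $\D$ with a meridian disk of a $+1$-framed meridian produces an annulus cobounded by $\K$ and that meridian, which is not the object the modification requires: the modification needs an annulus whose two boundary circles are the surgery curves $\eta'_1$ and $-\eta'_2$, with $lk(\eta'_1,\eta'_2)=1$, lying in the complement of a disk bounded by the \emph{unknot} $U$. Your second curve $\eta'_2$ is never actually constructed, the disjointness $\A'\cap D'=\emptyset$ and the $\pi_1$ condition (the analogue of condition $(3)$ of Definition $2.1$) are not checked for your configuration, and there is no argument that such a configuration exists in $\M$ at all. The repair is to abandon $\M'=\M$: build the Proposition $6.2$ annulus and a disjoint slice disk for $\K$ in $\M$, apply the $+1$-twist modification, verify on the boundary that the image of $\K$ is unknotted, and conclude by invertibility of the modification.
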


\begin{proof} Notice that it is enough to show that there exist a smoothly embedded $2$-disk $\D$ in $\M$ which bounds $\K$ and a smooth proper embedding of an annulus $\phi_{\A} : \C \times [0, 1] \hookrightarrow \M$ with $\text{Im}(\left. \phi_{\A} \right |_{\C \times \{ 0 \}})= \eta_1$, $\text{Im}(\left. \phi_{\A} \right |_{\C \times \{ 1 \}})= \eta_2$, $\text{Im}(\phi_{\A})= \A$, $\A \cap \D = \emptyset$ and $1=lk(\eta_1,\eta_2)$, so that $\M_{(\phi_{\A},1)}$ is homeomorphic to $\B$ and $K_{(\phi_{\A},1)}$ is the unknot, since we can simply perform the $-1$-twist annulus modification on $\M_{(\phi_{\A},1)}$ with the same annulus. We will use a smoothly embedded $2$-disk $\D$ and a smooth proper embedding of an annulus $\phi_{\A}$ described in Figure $22$. By Proposition $6.2$, $\M_{(\phi_{\A},1)}$ is homeomorphic to $\B$. In addition by performing handle slides, isotopies and Rolfsen twists, we see that $K_{(\phi_{\A},1)}$ is the unknot as needed (see Figure $24$ at the end of this paper).
\end{proof}

\bibliographystyle{alpha}
\bibliography{knotbib}

\newpage
\begin{figure}[h]
\centering
\includegraphics[width=6.1in]{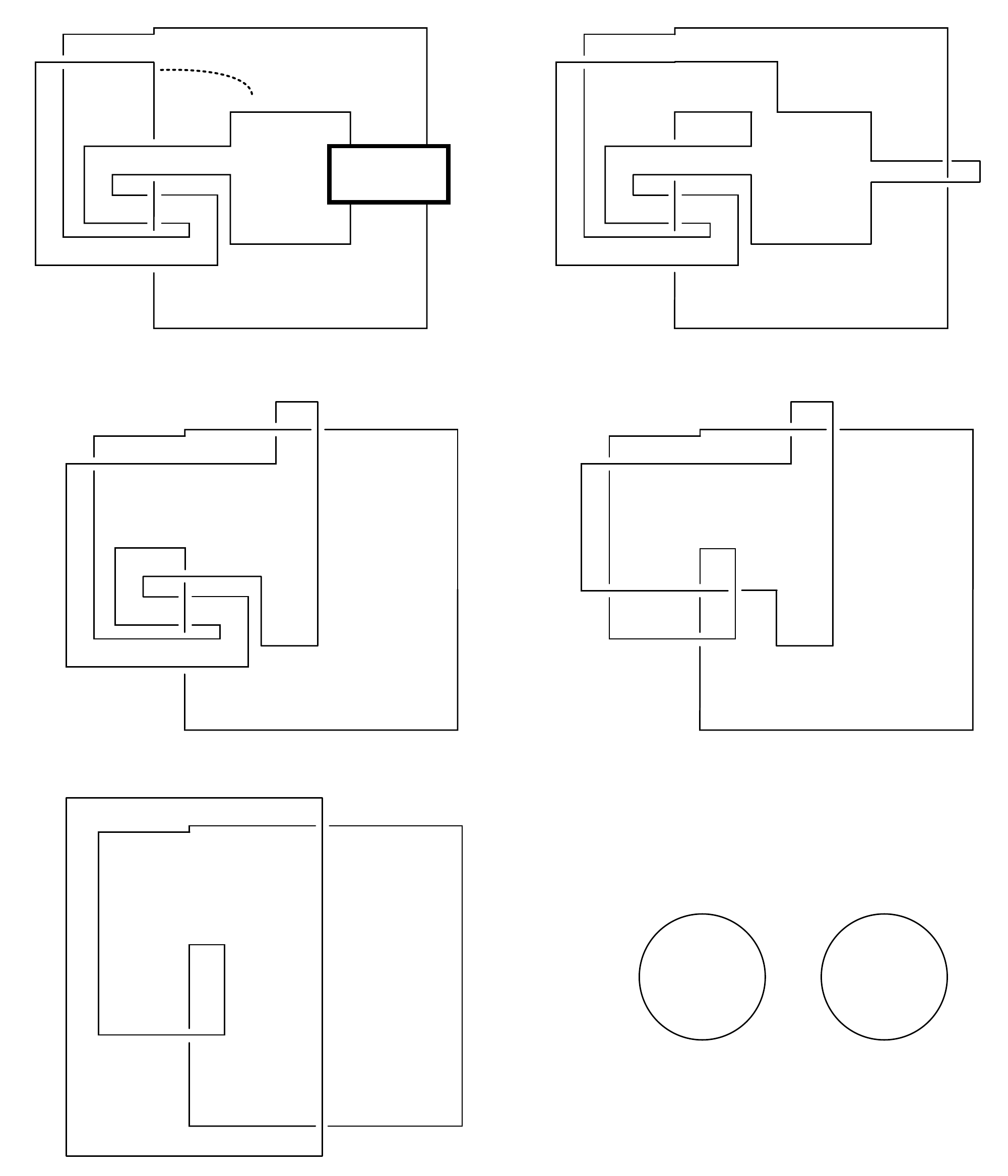}
  \put(-3.77,6){$1$}  
  \put(-3.1,6){$\Rightarrow$}
    \put(-6,3.5){$\cong$}
      \put(-3,3.5){$\cong$}
        \put(-6,1.1){$\cong$}        
        \put(-3,1.1){$\cong$}
\caption{Performing a ribbon move on $8_{20}$ gives a two component unlink. The second figure is obtained by performing a ribbon move along the dotted line. The rest of the figures are obtained by isotopies.}
\end{figure}

\newpage
\begin{figure}[h]
\centering
\includegraphics[width=5.5in]{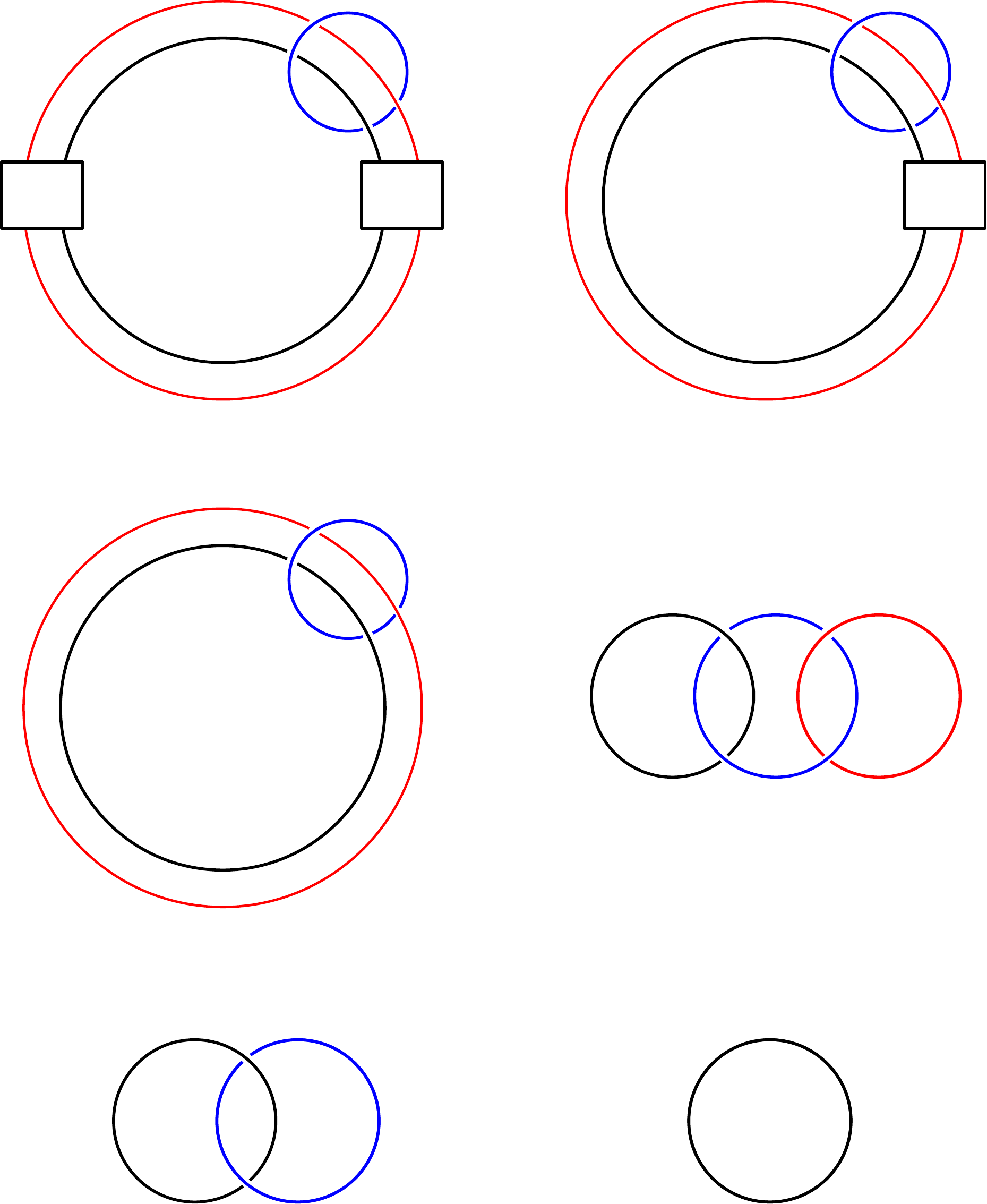}
  \put(-3.3,5.55){$1$}
    \put(-5.32,5.55){$\K$}
      \put(-0.28,5.55){$1$}
	\put(-4.32,6.8){$\eta_1$}
	\put(-4.1,6.76){$2$}
	\put(-3.42,6.7){$\eta_2$}
	\put(-3.2,6.56){$0$}
	\put(-4.46,4.78){ $K_{(\phi_{\A},1)}$}
	\put(-1.27,6.76){$2$}
	\put(-0.529,6.7){$0$}
	\put(-4.27,3.96){$1$}
	\put(-3.529,3.9){$0$}
	\put(-1.2,3.35){$0$}
	\put(-0.57,3.35){$1$}
		\put(-3.529,0.9){$-1$}
  \put(-2.75,5.55){$\cong$}
    \put(-5.8,2.7){$\Rightarrow$}
      \put(-2.7,2.7){$\cong$}
        \put(-5.8,0.35){$\Rightarrow$}        
        \put(-2.7,0.35){$\Rightarrow$}
\caption{The first figure describes $K_{(\phi_{\A},1)}$. The second figure is obtained by performing handle slides. The rest of the figures are obtained by performing Rolfsen twists and isotopies.}
\end{figure}

\end{document}